\documentclass{amsart}

\usepackage[utf8]{inputenc}
\usepackage{color}
\usepackage{xcolor}
\usepackage{hyperref}
\usepackage{amssymb}
\usepackage{float}
\usepackage{tikz,tikz-cd}
\usepackage{forest}
\usepackage{nicefrac}
\usepackage{enumerate}
\usepackage[nameinlink, capitalize, noabbrev]{cleveref}
\usetikzlibrary{shapes.geometric,decorations.markings}
\usetikzlibrary{decorations.pathreplacing}
\usetikzlibrary{fit}
\usetikzlibrary{automata}
\usetikzlibrary{positioning}
\usetikzlibrary{intersections}
\tikzset{mytext/.style={font=\small, text=black}}

\hypersetup{
    colorlinks=true,
    linkcolor=teal,
    citecolor=magenta,
    }

\newtheorem{Theorem}{Theorem}

\newtheorem{Conjecture}{Conjecture}
\newtheorem{Corollary}[Conjecture]{Corollary}
\newtheorem{proposition}{Proposition}[section]
\newtheorem{lemma}[proposition]{Lemma}

\newtheorem{theorem}[proposition]{Theorem}

\newtheorem{Question}[Conjecture]{Question}
\theoremstyle{definition}
\newtheorem{remark}[proposition]{Remark}

\numberwithin{equation}{section}

\title{On a question of Abért and Virág}
\author{Jorge Fariña-Asategui}
\address{Jorge Fariña-Asategui: Centre for Mathematical Sciences, Lund University, 223 62 Lund, Sweden -- Department of Mathematics, University of the Basque Country UPV/EHU, 48080 Bilbao, Spain}
\email{jorge.farina\_asategui@math.lu.se}
\keywords{Hausdorff dimension, self-similar groups, branch and weakly branch groups, Hausdorff spectra}
\subjclass[2020]{Primary: 20E08, 28A78; Secondary: 20E18}
\thanks{The author is supported by the Spanish Government, grant PID2020-117281GB-I00, partly with FEDER funds and by the Walter Gyllenberg Foundation from the Royal Physiographic Society of Lund.}

\begin{document}

\begin{abstract}
Abért and Virág proved in 2005 that the Hausdorff dimension of a non-trivial normal subgroup of a level-transitive 1-dimensional subgroup of the group of $p$-adic automorphisms $W_p$ is always 1. They further asked whether the same holds replacing 1-dimensional with positive dimensional. 

On the one hand, we provide a negative answer in general by giving counterexamples where the non-trivial normal subgroups are not all 1-dimensional. Furthermore, these counterexamples are pro-$p$ subgroups of $W_p$ with positive Hausdorff dimension in $W_p$ but with non-trivial center, and thus not weakly branch.

On the other hand, we restrict ourselves to the class of self-similar groups and answer the question of Abért and Virág in the positive in this case. Along the way, we generalize a result of Abért and Virág on the closed subgroups of $W_p$ being perfect in the sense of Hausdorff dimension to closed subgroups of any iterated wreath product $W_H$ and show that self-similar positive-dimensional subgroups of $W_H$ do not satisfy any group law.
\end{abstract}

\maketitle

\section{introduction}
\label{section: introduction}

Many important classes of groups arise as subgroups of iterated wreath products. Remarkable examples are both the class of branch groups \cite{Handbook}, introduced by Grigorchuk in 1997 and first appeared in print in \cite{NewHorizonsGrigorchuk}, which includes the first examples of groups of intermediate growth (so also of amenable but not elementary amenable groups)~\cite{GrigorchukMilnor}; and the class of self-similar groups \cite{SelfSimilar}, which includes the first example of an amenable but not subexponetially amenable group \cite{BartholdiVirag} and iterated monodromy groups which are important invariants in complex dynamics \cite{Thurston}.

Many properties of iterated wreath products and their subgroups were established by the remarkable work of Abért and Virág in \cite{AbertVirag}. Among other results, they proved the following; see \cref{section: Preliminaries} for the unexplained terms and notation in the following result and elsewhere in the introduction:

\begin{theorem}[{see {\cite[Theorem 7]{AbertVirag}}}]
    \label{theorem: abert and virag normal}
    Let $G\le_c W_p$ be a level-transitive closed subgroup with Hausdorff dimension 1 in $W_p$. Then any non-trivial normal closed subgroup $N\trianglelefteq_c G$ has Hausdorff dimension 1 in $G$.
\end{theorem}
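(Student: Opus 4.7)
The plan is to reduce the claim to a coset count. Setting $G_n := G \cap \st_{W_p}(n)$, one has $\dim_H^G(N) = \lowlim_n \log|NG_n/G_n|/\log|G/G_n|$, and the hypothesis $\dim_H^{W_p}(G) = 1$ gives $\log|G/G_n| = (1-o(1))\log|W_p/\st_{W_p}(n)|$, which is of order $p^n$. Thus the task reduces to showing $\log[G:NG_n] = o(p^n)$; equivalently, inside the $p$-group $G/G_n$ the normal subgroup $NG_n/G_n$ should contain all but subexponentially many of the simple $C_p$-factors of a chief series.

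Next, fix $1 \neq g \in N$ and let $n_0$ be minimal with $g \notin G_{n_0}$. The strategy is then to exploit level-transitivity to ``spread $g$ around the tree'': for every vertex $w$ at each level $m \ge n_0 - 1$ one can pick $h \in G$ mapping a fixed vertex in the support of $g$ to the corresponding relative of $w$, and then $g^h \in N$ acts non-trivially near $w$. Combined with normality, this supplies both a large family of conjugates $\{g^h : h \in G\} \subseteq N$ whose supports are distributed across the tree, and the whole iterated commutator chain $[G, g], [G,[G,g]], \ldots \subseteq N$.

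The main obstacle is turning this qualitative non-triviality into the quantitative lower bound $\log |NG_n/G_n| \ge (1-o(1)) \log |G/G_n|$. Since $G$ is not assumed to be branch, rigid vertex stabilizers may vanish and the usual commutator-with-rigid-stabilizer tool is unavailable, so the dimension hypothesis must be used directly. A natural route is to invoke (or establish along the way) the perfectness principle that the present paper generalizes later --- closed subgroups of $W_p$ with Hausdorff dimension $1$ equal their derived subgroup in Hausdorff dimension --- and iterate this against $g$ through the commutator chain, using level-transitivity at each step to guarantee that no dimension is lost. Verifying this ``no loss of dimension at each commutator step'' in the absence of a branch hypothesis is the delicate point, and where I would expect most of the work to lie.
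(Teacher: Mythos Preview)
There is a genuine gap. You explicitly set aside the rigid-stabilizer route with the sentence ``Since $G$ is not assumed to be branch, rigid vertex stabilizers may vanish and the usual commutator-with-rigid-stabilizer tool is unavailable'' --- but this is precisely the point you are missing. The hypothesis $\mathrm{hdim}_{W_p}(G)=1$ \emph{forces} the rigid vertex stabilizers to be large. Concretely: $\mathrm{St}_G(k)/\mathrm{St}_G(n)$ embeds via sections into the $p^k$-fold product of $\mathrm{Aut}\,T^{[n-k]}$; projecting away from one coordinate $v$, the image has log-order at most $(p^k-1)\log|W_p:\mathrm{St}_{W_p}(n-k)|$, while the domain has log-order $(1-o(1))\log|W_p:\mathrm{St}_{W_p}(n)|$ by the dimension hypothesis. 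The kernel of that projection is exactly $\mathrm{rist}_G(v)\,\mathrm{St}_G(n)/\mathrm{St}_G(n)$, and the count shows $\mathrm{hdim}_G(\mathrm{rist}_G(v))\ge p^{-k}$. Summing over $v\in\mathcal{L}_k$ gives $\mathrm{hdim}_G(\mathrm{Rist}_G(k))=1$, so $G$ is weakly branch with full-dimensional rigid level stabilizers. This is the content of \cref{lemma: rists dimension} (with the observation in the remark following it that for $d=1$ self-similarity is not needed) together with \cref{lemma: rists lower bound for strong hdim}.

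Once you have this, the proof is short and exactly the ``tool'' you discarded: a non-trivial normal $N$ contains $\mathrm{Rist}_G(n)'$ for some $n$ (standard for level-transitive weakly branch groups), and by the perfectness result (\cref{theorem: hdim of commutators}) together with \cref{lemma: strong dimension then product} one gets $\mathrm{hdim}_G(\mathrm{Rist}_G(n)')=\mathrm{hdim}_G(\mathrm{Rist}_G(n))=1$, hence $\mathrm{hdim}_G(N)=1$. Your proposed alternative --- iterating the commutator chain $[G,g],[G,[G,g]],\dots$ and checking ``no loss of dimension at each step'' --- is not a proof as stated: perfectness tells you $\mathrm{hdim}\,G'=\mathrm{hdim}\,G$, not that the normal closure of a single element is full-dimensional, and you give no mechanism for the latter. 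The actual mechanism is the rigid-stabilizer bound above, which you need to put back in.
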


Right after stating \cref{theorem: abert and virag normal} in \cite{AbertVirag}, they asked the following question:

\begin{Question}[Abért and Virág, 2005]
    \label{Question: abert and virag normal}
    Does \cref{theorem: abert and virag normal} hold for $G\le_c W_p$ level-transitive with positive Hausdorff dimension in $W_p$?
\end{Question}

Furthermore, the proof of \cref{theorem: abert and virag normal} in \cite{AbertVirag} shows that 1-dimensional subgroups of $W_p$ are weakly branch. In view of this, it is natural to further ask the following:
\begin{Question}
    \label{Question: positive dim implies weakly branch}
    If $G\le_c W_p$ is level-transitive with positive Hausdorff dimension in~$W_p$, is then $G$ weakly branch?
\end{Question}

Our first result in this paper answers both \textcolor{teal}{Questions} \ref{Question: abert and virag normal} and \ref{Question: positive dim implies weakly branch} in the negative:

\begin{Theorem}
    \label{Theorem: counterexample}
    For any $m\ge 2$, any transitive $H\le \mathrm{Sym}(m)$ and any $\alpha\in [0,1/m]$, there exists a closed subgroup $G\le_c W_H$ such that:
    \begin{enumerate}[\normalfont(i)]
        \item $G$ is level-transitive;
        \item $G$ has strong Hausdorff dimension $\alpha$ in $W_H$;
        \item there is a non-trivial normal closed subgroup $N\trianglelefteq_c G$ with $\mathrm{hdim}_G(N)=0$;
        \item $G$ is not weakly branch.
    \end{enumerate}
\end{Theorem}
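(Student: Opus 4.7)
The plan is to construct $G$ of the form $G=\overline{\langle c,K\rangle}$, where $c\in W_H$ is a non-trivial element of finite order destined to sit in $Z(G)$, and $K\le_c W_H$ is a level-transitive closed subgroup of strong Hausdorff dimension $\alpha$ whose elements all centralize $c$. Then $N:=\overline{\langle c\rangle}$ will be a finite, hence Hausdorff-dimension-zero, non-trivial central (and so normal) closed subgroup of $G$; since weakly branch groups are centerless, $G$ will satisfy (iii) and (iv) automatically. Level-transitivity (i) will be inherited from $K$, and (ii) will follow from the fact that adjoining the finite element $c$ alters $|G/\mathrm{Stab}_G(n)|$ only by a bounded multiplicative factor.

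For $c$, I would take any non-trivial $\sigma\in H$ and let $c=(1,\ldots,1)\sigma\in W_H$ be the corresponding rooted automorphism. A direct calculation with the wreath-product law gives
\[
C_{W_H}(c)=\bigl\{(g_1,\ldots,g_m)h : h\in C_H(\sigma),\ g_i=g_{\sigma^{-1}(i)}\ \text{for all}\ i\bigr\},
\]
from which one computes $\mathrm{hdim}(C_{W_H}(c))=\ell/m\ge 1/m$, where $\ell$ is the number of $\sigma$-orbits on $\{1,\ldots,m\}$. When $H$ contains an $m$-cycle I would pick $\sigma$ to be one, giving $\ell=1$ and $\mathrm{hdim}(C_{W_H}(c))=1/m$ exactly; this is the source of the upper bound $\alpha\le 1/m$ in the statement. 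Otherwise, I would take $\sigma\in Z(H)\setminus\{1\}$ when $Z(H)\ne 1$, or else a non-trivial element of $H$ whose centralizer remains transitive on $\{1,\ldots,m\}$, so that the construction below can yield a level-transitive $K$.

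The heart of the proof is then the construction, inside $C_{W_H}(c)$, of a level-transitive closed subgroup $K$ of strong Hausdorff dimension exactly $\alpha$ in $W_H$. I would build $K$ by a self-similar recursive scheme whose generators are chosen so that $|K/\mathrm{Stab}_K(n)|$ grows at the prescribed rate $|H|^{\alpha(m^n-1)/(m-1)}$ on the nose (yielding $\liminf=\limsup$ as required for strong dimension), with a rooted transitive element drawn from $C_H(\sigma)$ inserted to guarantee level-transitivity. Setting $G=\overline{\langle K,c\rangle}$ then yields the four claimed conclusions by the general observations of the first paragraph.

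The main obstacle is precisely this construction of $K$: it must simultaneously achieve (a) strong Hausdorff dimension equal to $\alpha$, (b) level-transitivity on the $m$-ary tree, and (c) containment in the proper subgroup $C_{W_H}(c)$, whose dimension is capped at $\ell/m$ and so forces the bound $\alpha\le 1/m$. The interplay between these three conditions --- in particular, reconciling level-transitivity with the orbit constraint $g_i=g_{\sigma^{-1}(i)}$ while fine-tuning the growth of the level quotients --- is where the delicate work sits.
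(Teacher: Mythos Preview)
Your overall strategy---manufacture a group with a non-trivial finite central subgroup, so that (iii) and (iv) come for free---is exactly the paper's. Where you diverge is in the implementation, and that is where you leave the gap you yourself flag.

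You frame the hard part as building a level-transitive $K$ of prescribed strong dimension $\alpha$ \emph{inside} the centralizer $C_{W_H}(c)$, subject to the constraint $g_i=g_{\sigma^{-1}(i)}$, and you do not carry this out. The paper sidesteps this entirely with one observation you are circling around but never state: the \emph{diagonal} embedding. For any $K\le_c W_H$, the subgroup
\[
D_m(K):=\psi^{-1}\bigl\{(k,\ldots,k):k\in K\bigr\}\le \mathrm{St}_{W_H}(1)
\]
commutes with \emph{every} rooted automorphism, not just a single $c$. The paper therefore takes $G:=\langle H,\,D_m(K)\rangle=H\times D_m(K)$, with the full rooted copy of $H$ in place of your $\langle c\rangle$. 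Everything is then immediate: $\mathrm{St}_G(1)=D_m(K)$ gives $\mathrm{hdim}_{W_H}(G)=\mathrm{hdim}_{W_H}(K)/m$; level-transitivity follows since $H$ is transitive on level~1 and $K$ below; $H\trianglelefteq G$ is finite, hence $0$-dimensional in $G$; and $\mathrm{rist}_G(v)=1$ for $v\in\mathcal L_1$ because $\mathrm{St}_G(1)$ is diagonal.

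The point is that once you use the diagonal, the group $K$ is chosen in $W_H$ itself with \emph{no} centralizer constraint, and the existence of a level-transitive closed $K\le_c W_H$ with strong dimension equal to any prescribed $d=m\alpha\in[0,1]$ is already in the literature (the paper cites it). Your ``self-similar recursive scheme'' is unnecessary, and your case analysis on whether $H$ contains an $m$-cycle or has non-trivial centre (which is incomplete as stated: a transitive $H$ need not satisfy either) is avoided altogether.
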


Each group in \cref{Theorem: counterexample} is constructed from another subgroup $K\le_c W_H$, and it inherits some of its properties. If $K$ is branch then the corresponding subgroup~$G_K$, even if it is not branch in the usual geometrical sense, it is branch according to the more general algebraic definition in \cite[Definition 1.1]{Handbook}. Indeed, the canonical action of $G_K$ on the tree determined by its branch structure as a group of tree automorphisms (inherited by a branch structure in $K$) is equivalent to the action of $K$ on the original tree; see the discussion at the end of \cref{section: general setting}. The following question was asked by Bartholdi, Grigorchuk and \v{S}uni\'{c} in \cite{Handbook}:

\begin{Question}[{see {\cite[Section 1.3, page 1012]{Handbook}}}]
    \label{Question: question 2 handbook}
    Every branch group from \cite[Definition 1.1]{Handbook} acts canonically on the tree determined by its branch structure as a group of tree automorphisms. Is the kernel of this action necessarily central?
\end{Question}

For the group $G_K$ above, the kernel of this canonical action is precisely the subgroup of rooted automorphisms $H$. Therefore, if $H$ is not abelian and $K$ is branch then $Z(G_K)=Z(H)<H$ (see \cref{proposition: center}) but the kernel of this action is $H\ne 1$. Hence the kernel of the action is not central, answering \cref{Question: question 2 handbook} in the negative.

Despite \cref{Theorem: counterexample}, one may wonder whether \textcolor{teal}{Questions} \ref{Question: abert and virag normal} and \ref{Question: positive dim implies weakly branch} have a positive answer if one restricts to a better-behaved subfamily of groups acting on rooted trees. The family of self-similar groups is a good candidate. In fact, the author proved in \cite{RestrictedSpectra} that self-similar groups are well behaved in terms of Hausdorff dimension as they have strong Hausdorff dimension, i.e. their Hausdorff dimension is given by a proper limit. Note that this is also the case for 1-dimensional subgroups. Thus, it is natural to consider \textcolor{teal}{Questions} \ref{Question: abert and virag normal} and \ref{Question: positive dim implies weakly branch} restricted to self-similar positive-dimensional subgroups of $W_p$, and more generally of $W_H$. 

A key tool in the proof of \cref{theorem: abert and virag normal}, is that closed subgroups of $W_p$ are perfect in the sense of Hausdorff dimension:

\begin{theorem}[{see {\cite[Theorem 5]{AbertVirag}}}]
    \label{theorem: perfect hdim}
    Let $G\le_c W_p$ be a closed subgroup. Then 
    $$\mathrm{hdim}_{W_p}(G)=\mathrm{hdim}_{W_p}(G').$$
\end{theorem}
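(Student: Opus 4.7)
The plan is to use the standard expression
$$\mathrm{hdim}_{W_p}(H)=\liminf_{n\to\infty}\frac{\log_p|H_n|}{\log_p|W_{p,n}|},$$
where $W_{p,n}$ is the $n$-th level truncation of $W_p$ (with $\log_p|W_{p,n}|=(p^n-1)/(p-1)$) and $H_n$ is the image of $H$ in $W_{p,n}$. Since quotient maps commute with taking commutator subgroups, $(G')_n=(G_n)'$, and hence
$$\log_p|G_n|-\log_p|(G')_n|=\log_p|G_n^{\mathrm{ab}}|.$$
The theorem thus reduces to proving
$$\lim_{n\to\infty}\frac{\log_p|G_n^{\mathrm{ab}}|}{(p^n-1)/(p-1)}=0,$$
so the entire content is a sufficiently strong upper bound on the size of the abelianization of each congruence quotient $G_n$.

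For this bound I would argue by induction on $n$ using the wreath-product decomposition $W_{p,n}\cong W_{p,n-1}\wr\mathbb{Z}/p$. Writing $V_n:=G_n\cap\mathrm{St}(1)$ and $K_i:=\pi_i(V_n)\le W_{p,n-1}$ for the $p$ subtree projections, the transitive case $G_n/V_n\cong\mathbb{Z}/p$ is handled as follows: a generator $g\in G_n$ of $G_n/V_n$ cyclically shifts the subtree factors, so all $K_i$ coincide with a common subgroup $K\le W_{p,n-1}$, and the five-term exact sequence of $1\to V_n\to G_n\to\mathbb{Z}/p\to 1$, using that the Schur multiplier $H_2(\mathbb{Z}/p;\mathbb{Z})$ vanishes, yields
$$0\;\longrightarrow\;(V_n^{\mathrm{ab}})_C\;\longrightarrow\;G_n^{\mathrm{ab}}\;\longrightarrow\;\mathbb{Z}/p\;\longrightarrow\;0,$$
where $C=\langle g\rangle\cong\mathbb{Z}/p$ acts on $V_n^{\mathrm{ab}}$ by cyclic shift of the $p$ subtree coordinates. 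Passing to $C$-coinvariants should collapse the $p$ cyclically identified copies of $K^{\mathrm{ab}}$ to essentially one copy, which combined with the inductive bound on $K^{\mathrm{ab}}$ yields a recursion of the form $\log_p|G_n^{\mathrm{ab}}|\le\log_p|K^{\mathrm{ab}}|+O(1)$; iterating gives $\log_p|G_n^{\mathrm{ab}}|=O(n)$, which is $o((p^n-1)/(p-1))$ as required.

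The main obstacle, I expect, is twofold. First, this clean inductive step uses level-transitivity at each stage; at levels where $G_n$ acts intransitively the cyclic-shift trick is unavailable, and I would handle this by decomposing the action of $G$ on $\partial W_p$ into its orbit closures, restricting an open subgroup of $G$ to each orbit (where level-transitivity is restored), and verifying that Hausdorff dimension behaves well under this decomposition. Second, the collapse from the $p$ cyclic copies to a single copy of $K^{\mathrm{ab}}$ requires a Goursat-type analysis of the subdirect product $V_n\le K_1\times\cdots\times K_p$, because in general $V_n^{\mathrm{ab}}$ does not inject into $\prod_i K_i^{\mathrm{ab}}$, and one must combine this with the Herbrand-quotient identity $|(V_n^{\mathrm{ab}})_C|=|(V_n^{\mathrm{ab}})^C|$ valid for cyclic $p$-actions on finite abelian $p$-groups. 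Making this sub-direct/coinvariant accounting precise is, I expect, the technical heart of the argument.
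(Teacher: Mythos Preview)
Your reduction to $\lim_{n\to\infty}\log_p|G_n:G_n'|\cdot p^{-n}=0$ is correct and is exactly the paper's first step. After that your route diverges from the paper's, and it contains a genuine gap.

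The recursion $\log_p|G_n^{\mathrm{ab}}|\le\log_p|K^{\mathrm{ab}}|+O(1)$ with a uniform constant --- and hence the conclusion $\log_p|G_n^{\mathrm{ab}}|=O(n)$ in the transitive case --- is false. Take $V=\mathbb{F}_p^{\,n}$ and $H=\{I+E:E\in M_{a\times b}(\mathbb{F}_p)\}\le\mathrm{GL}(V)$, the elementary abelian group supported in a fixed off-diagonal $a\times b$ block ($a+b=n$). Then $P=V\rtimes H$ is a transitive $p$-group of degree $p^n$, hence conjugate into $W_{p,n}$, with $P'=[V,H]$ of order $p^a$, so $\log_p|P:P'|=b+ab$; for $a=b=n/2$ this is $\sim n^2/4$. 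Such a $P$ arises as a congruence quotient $G_n$ of a level-transitive closed $G\le W_p$ (for instance $G=P\cdot\mathrm{St}_{W_p}(n)$), so no $O(n)$ bound uniform in the group can hold. The failure is exactly where you locate it: since $V_n'$ can be far smaller than $V_n\cap(K')^p$, the module $V_n^{\mathrm{ab}}$ need not embed in $(K^{\mathrm{ab}})^p$, and the picture of ``$p$ cyclic copies of $K^{\mathrm{ab}}$ collapsing to one under coinvariants'' simply does not hold. Your second obstacle (decomposing into boundary orbit closures for the intransitive case) is also not innocuous, since such orbit closures do not in general correspond to regular $p$-adic subtrees on which the inductive argument could be rerun.

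The paper sidesteps both obstacles by abandoning the wreath recursion entirely. For arbitrary (not necessarily transitive) $G$ it compares the orbit structures of $G_n$ on levels $n$ and $n-k$: orbits at level $n$ descending from level-$(n-k)$ orbits that split into the maximal number $p^k$ of sub-orbits are governed entirely by the action at level $n-k$ and contribute at most $p^{n-k}$ to $\log|G_n:G_n'|$, while the remaining orbits occupy an asymptotically negligible fraction of level~$n$. The only external input is the elementary Aschbacher--Guralnick bound $|G:G'|\le 2^{\#X-1}$ for $G\le\mathrm{Sym}(X)$; no transitivity hypothesis, no subdirect or Herbrand-quotient bookkeeping, and no five-term sequence are needed, and letting $k\to\infty$ finishes the proof.
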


As a first step towards a positive answer to \textcolor{teal}{Questions} \ref{Question: abert and virag normal} and \ref{Question: positive dim implies weakly branch}, we generalize the proof of \cref{theorem: perfect hdim} to any iterated wreath product:

\begin{Theorem}
\label{theorem: hdim of commutators}
    Let $G\le_c W_H$ be a closed subgroup. Then 
    $$\mathrm{hdim}_{W_H}(G)=\mathrm{hdim}_{W_H}(G').$$
\end{Theorem}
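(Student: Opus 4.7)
The inclusion $G' \subseteq G$ gives the easy direction $\mathrm{hdim}_{W_H}(G') \le \mathrm{hdim}_{W_H}(G)$ for free, so the substantive task is the reverse. Writing $G_n$ and $G'_n$ for the images in $W_{H,n} := W_H/\st_n$, I will use
$$\mathrm{hdim}_{W_H}(G) = \liminf_{n\to\infty} \frac{\log|G_n|}{\log|W_{H,n}|}$$
together with $G'_n \le G_n$ to reduce the equality to the single asymptotic statement
$$\frac{\log[G_n : G'_n]}{\log|W_{H,n}|} \longrightarrow 0 \quad \text{as } n \to \infty.$$
Since $[G_n, G_n] \subseteq G'_n$, the quotient $G_n/G'_n$ is a quotient of the abelianization $G_n^{\mathrm{ab}}$, so it suffices to establish $\log|G_n^{\mathrm{ab}}| = o(\log|W_{H,n}|)$. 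Recalling that $|W_{H,n}| = |H|^{(m^n-1)/(m-1)}$ grows doubly exponentially in $n$, it is enough to bound $|G_n^{\mathrm{ab}}|$ by a function growing single-exponentially in $n$.

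I plan to establish such a bound by induction on $n$, exploiting the recursive structure $W_{H,n} \cong W_{H,n-1}^m \rtimes H$. Given a closed subgroup $G \le W_{H,n}$, let $K$ be the intersection of $G$ with the first-level stabilizer in $W_{H,n}$ (isomorphic to $W_{H,n-1}^m$) and set $\Gamma = G/K \le H$. The five-term exact sequence in group homology yields
$$(K^{\mathrm{ab}})_\Gamma \longrightarrow G^{\mathrm{ab}} \longrightarrow \Gamma^{\mathrm{ab}} \longrightarrow 0,$$
so $|G^{\mathrm{ab}}| \le |(K^{\mathrm{ab}})_\Gamma| \cdot |H^{\mathrm{ab}}|$. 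Writing $P_1, \ldots, P_m \le W_{H,n-1}$ for the coordinate projections of $K \subseteq W_{H,n-1}^m$, the transitivity of $\Gamma$ on each of its orbits in $\{1,\ldots,m\}$ collapses the contributions of coordinates lying in the same $\Gamma$-orbit in the coinvariants functor, yielding a bound of the form
$$|(K^{\mathrm{ab}})_\Gamma| \le \prod_{O} |P_{i_O}^{\mathrm{ab}}|,$$
where $O$ runs over $\Gamma$-orbits in $\{1,\ldots,m\}$ and $i_O \in O$ is an arbitrary representative. Feeding the inductive hypothesis into this bound is expected to keep $\log|G_n^{\mathrm{ab}}|$ growing much more slowly than $\log|W_{H,n}|$.

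\textbf{Main obstacle.} The principal technical challenge lies in controlling the coinvariants $(K^{\mathrm{ab}})_\Gamma$ when $K$ is only a proper subdirect product of $W_{H,n-1}^m$: the natural map $K^{\mathrm{ab}} \to \prod_i P_i^{\mathrm{ab}}$ need not be injective, and bounding its kernel uniformly will require combining Goursat's description of subdirect products with the collapsing property of coinvariants under transitive permutation actions. A further subtlety, absent from the pro-$p$ setting of \cref{theorem: perfect hdim} treated by Abért and Virág, is the need to handle a base group $H$ that may be non-abelian and whose action interacts non-trivially with the commutator relations at every level of the tree. The inductive accounting must therefore be sharp enough so that, even when propagated through every level, the cumulative contribution from abelianization remains negligible relative to the doubly-exponential order of $|W_{H,n}|$.
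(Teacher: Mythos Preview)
Your reduction to showing $\log|G_n:G_n'|=o(m^n)$ is exactly right, and the paper does the same. But the inductive scheme you outline does not close. The problem is not the coinvariants estimate you flag as the ``main obstacle''; even granting
\[
\log|G_n^{\mathrm{ab}}|\ \le\ \log|H| + \sum_{O}\log\bigl|P_{i_O}^{\mathrm{ab}}\bigr|,
\]
you cannot run an induction on $n$ uniformly over subgroups: for the elementary abelian subgroup $A\cong(\mathbb{Z}/p)^{m^{n-1}}$ inside the last-level stabilizer of $W_{H,n}$ one has $\log|A^{\mathrm{ab}}|\asymp m^{n}$, so no uniform bound of the shape $f(n)=o(m^n)$ exists to serve as inductive hypothesis. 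If instead you try to iterate the inequality for your fixed $G$, the groups $P_{i_O}$ appearing on the right are different closed subgroups at each step, and after $n$ iterations the total bound is $\log|H|$ times the number of internal nodes in the resulting recursion tree. That tree can have order $m^n$ nodes (for instance whenever the root action is trivial at many levels, so every vertex branches into $m$ children), and the argument stalls at $O(m^n)$ rather than $o(m^n)$. Your sentence ``feeding the inductive hypothesis into this bound is expected to keep $\log|G_n^{\mathrm{ab}}|$ growing much more slowly'' hides precisely this difficulty: you never say what the inductive hypothesis is, and none of the natural candidates works.

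The paper avoids recursion altogether. It views $G_n$ as a permutation group on $\mathcal{L}_n$ and uses only the elementary bound $\log|P:P'|\le\#X$ for $P\le\mathrm{Sym}(X)$ (Aschbacher--Guralnick / Kov\'acs--Praeger), applied after a splitting of the $G_n$-orbits on $\mathcal{L}_n$. For a fixed $k$, an orbit at level $n$ lies in $A_n$ if its ancestor at level $n-k$ splits into the maximal number $m^k$ of orbits at level $n$, and in $B_n$ otherwise. The action of $G_n$ on $A_n$ is determined by the action at level $n-k$, so its contribution is at most $m^{n-k}$; and a counting argument using monotonicity of $\#\mathcal{O}_n/m^n$ gives $\#B_n=o(m^n)$. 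Letting $k\to\infty$ finishes. This orbit-partition idea is the missing ingredient in your plan; the one-level wreath decomposition by itself is too coarse to see it.
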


Note that \cref{theorem: hdim of commutators} further generalizes \cite[Proposition 3.1]{JorgeMikel} from topologically finitely generated subgroups to arbitrary subgroups. The proof of \cref{theorem: hdim of commutators} also fixes a gap in the original proof of \cref{theorem: perfect hdim}, where the authors claimed that a result of Kovács and Newman in \cite{KovacsNewman} gives a certain bound in their proof but this bound was in fact a conjecture at the time \cite{AbertVirag} was published; see \cref{remark: av gap}.

In any case, we simplify the proof of \cref{theorem: perfect hdim} and show that the simpler bound in \cite{KovacsPraeger} (see also \cite{Bound}) is enough to prove perfectness of the  Hausdorff dimension, even in the general setting of \cref{theorem: hdim of commutators}.

In order to answer \textcolor{teal}{Questions} \ref{Question: abert and virag normal} and \ref{Question: positive dim implies weakly branch} in the positive for self-similar positive-dimensional groups, we prove that they have large rigid stabilizers; see \textcolor{teal}{Lemmata}~\ref{lemma: rists dimension} and \ref{lemma: rists lower bound for strong hdim}. This together with \cref{theorem: hdim of commutators} yields a positive answer to both questions in this case; see \cref{proposition: self-similar}. 

We take a step further and apply recent results in \cite{FiniteSpectrum} to further obtain the full Hausdorff spectrum of these groups. We summarize all these results in the following theorem:

\begin{Theorem}
    \label{Theorem: self-similar spectra}
    Let $G\le_c W_H$ be a closed self-similar level-transitive subgroup with positive Hausdorff dimension in $W_H$. Then:
    \begin{enumerate}[\normalfont(i)]
        \item $G$ is weakly branch;
        \item the full Hausdorff spectrum is complete: $\mathrm{hspec}(G)=[0,1]$;
        \item any non-trivial normal closed subgroup $1\ne N\trianglelefteq_c G$ has Hausdorff dimension~1 in $G$. In particular, the normal Hausdorff spectrum of $G$ is given by $\mathrm{hspec}_\trianglelefteq(G)=\{0,1\}$.
    \end{enumerate}
\end{Theorem}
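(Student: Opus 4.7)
My plan is to establish the three parts in the order (i), (iii), (ii), bootstrapping from the rigid-stabilizer machinery developed earlier in the paper. Part~(i) is immediate from Lemmata~\ref{lemma: rists dimension} and~\ref{lemma: rists lower bound for strong hdim}: these give that $\mathrm{rist}_G(v)$ has positive Hausdorff dimension for every vertex $v$, in particular $\mathrm{rist}_G(v) \ne 1$, and together with level-transitivity of $G$ this is the definition of weakly branch.

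For part~(iii), I would run a Grigorchuk-style commutator trick. Fix $1 \ne \eta \in N$ and pick a vertex $v$ deep enough in the tree that $v$ and $\eta(v)$ lie in disjoint subtrees rooted at two distinct vertices of some lower level. For any $g, h \in \mathrm{rist}_G(v)$, the conjugate $\eta^{-1} g^{-1} \eta$ lies in $\mathrm{rist}_G(\eta^{-1}(v))$ and therefore commutes with $h$; expanding $[h,[g,\eta]]$ collapses it to $[h, g]$, which shows $\mathrm{rist}_G(v)' \le N$. Normality of $N$ together with level-transitivity of $G$ upgrades this to $\mathrm{Rist}_G(k)' \le N$, where $k$ is the level of $v$: conjugating by suitable elements of $G$ sends $\mathrm{rist}_G(v)'$ to $\mathrm{rist}_G(w)'$ for every $w$ at level $k$, and distinct rigid stabilizers at a common level commute. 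Applying Theorem~\ref{theorem: hdim of commutators} to $\mathrm{Rist}_G(k) \le_c W_H$ yields $\mathrm{hdim}_{W_H}(\mathrm{Rist}_G(k)') = \mathrm{hdim}_{W_H}(\mathrm{Rist}_G(k))$, and this last quantity is identified with $\mathrm{hdim}_{W_H}(G)$ via Lemma~\ref{lemma: rists dimension}. Since self-similar subgroups of $W_H$ have strong Hausdorff dimension by \cite{RestrictedSpectra}, the ambient identity $\mathrm{hdim}_{W_H}(N) = \mathrm{hdim}_{W_H}(G)$ translates into the relative identity $\mathrm{hdim}_G(N) = 1$, and in particular $\mathrm{hspec}_\trianglelefteq(G) = \{0,1\}$.

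For part~(ii), once (i) and (iii) are in hand, $G$ is weakly branch, self-similar with strong Hausdorff dimension, and has normal Hausdorff spectrum $\{0, 1\}$. These are precisely the hypotheses of the spectrum-completion results in \cite{FiniteSpectrum}, which produce closed (not necessarily normal) subgroups of $G$ realizing every $\alpha \in [0, 1]$ as Hausdorff dimension, built as subdirect products of rigid stabilizers along sufficiently deep levels. Hence $\mathrm{hspec}(G) = [0, 1]$.

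The main obstacle is the normality upgrade in part~(iii). The commutator trick alone only yields $\mathrm{rist}_G(v)' \le N$ for a single vertex $v$, whose $W_H$-dimension is a small fraction of $\mathrm{hdim}_{W_H}(G)$; one must carefully exploit normality of $N$ together with level-transitivity of $G$ to inflate this to the rigid stabilizer of the whole level before Theorem~\ref{theorem: hdim of commutators} can be wielded to recover the full ambient dimension, and the translation from $W_H$-dimension back to $G$-dimension must be justified via the strong Hausdorff dimension of $G$.
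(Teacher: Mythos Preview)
Your proposal is correct and follows essentially the same route as the paper: Lemmata~\ref{lemma: rists dimension} and~\ref{lemma: rists lower bound for strong hdim} yield (i) and $\mathrm{hdim}_G(\mathrm{Rist}_G(k))=1$; the containment $\mathrm{Rist}_G(k)'\le N$ (which the paper simply cites from \cite{pro-c} rather than reproving via the Grigorchuk commutator trick you sketch) combined with \cref{theorem: hdim of commutators} and strong Hausdorff dimension yields (iii); and the spectrum result from \cite{FiniteSpectrum} (recorded in the paper as \cref{theorem: full spectra jone and oihana}, whose hypothesis is precisely $\mathrm{hdim}_G(\mathrm{Rist}_G(n))=1$) yields (ii). One small correction: identifying $\mathrm{hdim}_{W_H}(\mathrm{Rist}_G(k))$ with $\mathrm{hdim}_{W_H}(G)$ requires Lemma~\ref{lemma: rists lower bound for strong hdim} together with \cref{lemma: strong dimension then product}, not just Lemma~\ref{lemma: rists dimension}.
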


In particular, from \textcolor{teal}{Theorems} \ref{theorem: hdim of commutators} and \ref{Theorem: self-similar spectra}, we get a restriction on which groups can be realized as self-similar level-transitive groups with positive Hausdorff dimension:

\begin{Corollary}
    Let $G$ be a profinite group with non-trivial center. Then $G$ does not admit a positive-dimensional faithful action on a regular rooted tree as a self-similar level-transitive closed group.
\end{Corollary}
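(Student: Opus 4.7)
The plan is to argue by contradiction. Suppose a profinite group $G$ with $Z(G)\ne 1$ admits a positive-dimensional faithful self-similar level-transitive action on a regular rooted tree; equivalently, $G$ embeds as a closed subgroup $G\le_c W_H$ satisfying the hypotheses of \cref{Theorem: self-similar spectra}. I will exhibit a closed normal subgroup $N\trianglelefteq_c G$ whose Hausdorff dimension in $G$ must equal $1$ by \cref{Theorem: self-similar spectra}(iii) but actually equals $0$.

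Pick any $z\in Z(G)\setminus\{1\}$ and set $N:=\overline{\langle z\rangle}$. Since $z$ is fixed by every inner automorphism of $G$, the subgroup $N$ is normal in $G$; and $N\ne 1$ because $z\ne 1$. Invoking \cref{Theorem: self-similar spectra}(iii)---whose proof rests on \cref{theorem: hdim of commutators}---immediately gives
\[
\mathrm{hdim}_G(N)=1.
\]

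On the other hand, $N$ is procyclic. Let $\mathrm{St}(n)$ denote the level-$n$ stabilizer in $W_H$ and $m$ the degree of $H$, so that $W_H/\mathrm{St}(n)$ is the finite iterated wreath product $H_n$ consisting of $n$ copies of $H$. An induction based on the standard bound $\exp(A\wr B)\le \exp(A)\exp(B)$ yields $\exp(H_n)\le \exp(H)^n$, whereas $|H_n|=|H|^{(m^n-1)/(m-1)}$. Since the image of $N$ in $H_n$ is cyclic, its order is bounded by $\exp(H_n)$, and therefore
\[
\mathrm{hdim}_{W_H}(N)\;\le\;\liminf_{n\to\infty}\frac{n\log\exp(H)}{\tfrac{m^n-1}{m-1}\log|H|}\;=\;0.
\]

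To transfer this estimate into $G$, I use that self-similar closed subgroups of $W_H$ have strong Hausdorff dimension in $W_H$, as shown in \cite{RestrictedSpectra}. Setting $\alpha:=\mathrm{hdim}_{W_H}(G)>0$, strongness means $\log|G\mathrm{St}(n)/\mathrm{St}(n)|/\log|H_n|\to\alpha$. Because $N\le G$, the numerator $\log|N\mathrm{St}(n)/\mathrm{St}(n)|$ appearing in both $\mathrm{hdim}_G(N)$ and $\mathrm{hdim}_{W_H}(N)$ is identical, so the quotient formula $\mathrm{hdim}_G(N)=\mathrm{hdim}_{W_H}(N)/\alpha$ forces $\mathrm{hdim}_G(N)=0$, contradicting the previous display. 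Hence $Z(G)=1$. The only delicate point is the quotient formula linking $\mathrm{hdim}_{W_H}$ and $\mathrm{hdim}_G$, which rests on the strong Hausdorff dimension of $G$; everything else is a straightforward growth-rate comparison in iterated wreath products.
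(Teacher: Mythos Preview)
Your proof is correct. The overall architecture matches the paper's intended argument: produce a non-trivial closed normal subgroup that must have Hausdorff dimension $1$ in $G$ by \cref{Theorem: self-similar spectra}(iii), show it has Hausdorff dimension $0$ in $W_H$, and derive a contradiction via the multiplicativity in \cref{lemma: strong dimension then product} (valid because $G$ has strong Hausdorff dimension by \cref{theorem: strong self-similar}).

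The one place you diverge is in proving $\mathrm{hdim}_{W_H}(N)=0$. You pass to a procyclic $N=\overline{\langle z\rangle}$ and bound its congruence images by the exponent of the finite iterated wreath product, which grows only polynomially in $n$. This works, but the paper's route is shorter and more general: take $N=Z(G)$ itself (closed in any profinite group) and apply \cref{theorem: hdim of commutators} directly to the abelian group $N$, obtaining $\mathrm{hdim}_{W_H}(N)=\mathrm{hdim}_{W_H}(N')=\mathrm{hdim}_{W_H}(1)=0$ in one line. Your exponent estimate is essentially rederiving a very special case of \cref{theorem: hdim of commutators} by hand; since you already invoke that theorem implicitly through \cref{Theorem: self-similar spectra}(iii), you may as well use it explicitly and avoid the wreath-product exponent computation.
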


Furthermore, note that as a consequence of \cref{theorem: hdim of commutators}, any virtually solvable subgroup $G\le W_H$ has zero Hausdorff dimension in $W_H$. The analogous result for $W_p$ was deduced by Abért and Virág using \cref{theorem: perfect hdim} and they conjectured that the same holds for groups satisfying a group law, i.e. groups satisfying a group law should have Hausdorff dimension zero in $W_p$. Since weakly branch closed subgroups of $W_H$ do not satisfy any group law by a result of Abért \cite[Theorem 1.1]{Abert}, \cref{Theorem: self-similar spectra} also answers this conjecture of Abért and Virág in the self-similar case:

\begin{Corollary}
    Let $G\le W_H$ be self-similar. Then, if $G$ satisfies a group law it has Hausdorff dimension zero in $W_H$.
\end{Corollary}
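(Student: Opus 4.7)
The plan is to argue by contradiction, combining \cref{Theorem: self-similar spectra}(i) with Abért's theorem \cite[Theorem 1.1]{Abert}, exactly as suggested in the paragraph preceding the corollary. Suppose, for contradiction, that $G \le W_H$ is self-similar, satisfies some group law $w$, and yet $\mathrm{hdim}_{W_H}(G) > 0$.

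My first step is to replace $G$ by its closure $\overline{G}$. Self-similarity passes to the closure because the section maps $g \mapsto g|_v$ are continuous; the law $w$ is preserved because $\{(g_1, \dots, g_k) : w(g_1, \dots, g_k) = 1\}$ is closed and contains the dense subset $G^k$; and Hausdorff dimension in $W_H$ is unchanged under closure. So we may assume $G$ is closed and self-similar, still satisfies $w$, and still has positive Hausdorff dimension.

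My second step is to produce a closed, level-transitive, self-similar subgroup $G^* \le G$ with positive Hausdorff dimension. Letting $m$ denote the degree of $H$ and writing $O_1, \dots, O_r$ for the $G$-orbits on the first level of the tree (with representatives $v_i \in O_i$), the self-similar wreath recursion yields the bound
\[
\mathrm{hdim}_{W_H}(G) \;\le\; \frac{1}{m} \sum_{i=1}^{r} |O_i| \cdot \mathrm{hdim}_{W_H}(G|_{v_i}),
\]
so some section $G|_{v_i}$ is again a closed self-similar subgroup of $W_H$ with Hausdorff dimension at least $\mathrm{hdim}_{W_H}(G) > 0$. Iterating this construction along an appropriate branch of the orbit tree of $G$ yields the desired $G^*$. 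Since $G^* \le G$, the subgroup $G^*$ also satisfies the law $w$. Then \cref{Theorem: self-similar spectra}(i) makes $G^*$ weakly branch, while \cite[Theorem 1.1]{Abert} forbids $G^*$ from satisfying any group law, a contradiction.

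The main obstacle is the second step: naively iterating sections need not terminate in a level-transitive group, since passing to a section can \emph{refine} the orbit partition on deeper levels of the tree rather than coarsening it. The cleanest route is to analyze the orbit tree of $G$ (whose level-$n$ vertices are the $G$-orbits on the $n$-th level of $T_H$) and apply a König-type argument, together with the wreath-recursion bound above, to locate an infinite ray along which some section is genuinely level-transitive while preserving positive Hausdorff dimension. Once this reduction is in place, the conclusion is an immediate consequence of the results already established in the paper.
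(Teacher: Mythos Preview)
The paper's ``proof'' is precisely the sentence preceding the corollary: one invokes \cref{Theorem: self-similar spectra}(i) to see that a self-similar, closed, level-transitive $G$ of positive dimension is weakly branch, and then Ab\'ert's theorem \cite[Theorem 1.1]{Abert} forbids any group law. No further reduction is carried out. In particular, the paper tacitly reads the hypotheses of \cref{Theorem: self-similar spectra} (closed, level-transitive) into the corollary; it does not attempt to remove either of them. Your core argument is therefore the same as the paper's, and your first step (passing to the closure) is a harmless and correct way of supplying the ``closed'' hypothesis.

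Your second step, however, is where you diverge from the paper, and it carries a genuine gap. You assert that for a vertex $v$ the section group $G|_v$ is again a closed \emph{self-similar} subgroup of $W_H$. Self-similarity of $G$ only gives $G|_v\subseteq G$; it does \emph{not} give $(G|_v)|_w\subseteq G|_v$, because $g|_{vw}\in G$ need not lie in the image of the section map at $v$ (this would require something like fractality at $v$). Hence the iterated section process does not stay inside the class of self-similar groups, and \cref{Theorem: self-similar spectra} cannot be applied to the limiting object $G^*$ you are trying to construct. The wreath-recursion inequality you write also hides a $\limsup$/$\liminf$ issue that you are implicitly resolving by invoking strong Hausdorff dimension of the sections---again something that follows from self-similarity, which you have not secured. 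The K\"onig-type sketch at the end does not address either point.

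In short: under the intended hypotheses (closed and level-transitive), the proof is exactly the one-line deduction the paper indicates, and your proposal reproduces it. Your additional reduction aimed at dispensing with level-transitivity is not something the paper does, and as it stands it is incomplete because sections of self-similar groups need not be self-similar.
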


As a weakly regular branch profinite group $G\le W_H$ has positive Hausdorff dimension in $W_H$ by \cite[Proposition 5.3]{RestrictedSpectra} and it is level-transitive by definition, \cref{Theorem: self-similar spectra} yields the full and the normal Hausdorff spectra of any weakly regular branch group, answering partially \cite[Question 3.7]{FiniteSpectrum}:

\begin{Corollary}
\label{Corollary: spectra of weakly regular branch groups}
    Let $G\le_c W_H$ be a weakly regular branch closed subgroup. Then $G$ satisfies the assumption in \cref{Theorem: self-similar spectra}.
\end{Corollary}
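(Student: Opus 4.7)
The plan is to verify the three hypotheses of \cref{Theorem: self-similar spectra}, namely that $G$ is (i) self-similar, (ii) level-transitive, and (iii) of positive Hausdorff dimension in $W_H$. Once these are in place, the three conclusions of the corollary follow immediately by direct application of that theorem, with no further work required.

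The first two conditions are built into the very definition of a weakly regular branch group. Such a group is, by definition, a self-similar level-transitive closed subgroup of $W_H$ admitting a non-trivial closed subgroup $K\le_c G$ whose $|H|$-fold direct power is contained in $K$ via the first-level self-similar embedding. In particular, both the self-similarity of $G$ and its level-transitive action on the tree are part of the hypothesis, and there is nothing to prove.

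For condition (iii), I would invoke \cite[Proposition 5.3]{RestrictedSpectra} directly, which records that any weakly regular branch closed subgroup of $W_H$ has positive Hausdorff dimension in $W_H$. This is already alluded to in the paragraph preceding the corollary in the introduction and is the only nontrivial input of the argument.

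The only point that requires any care is confirming that the definition of weakly regular branch in use is the standard one entailing both self-similarity and level-transitivity, which is the convention adopted in \cref{section: Preliminaries}. Beyond that, there is no genuine obstacle: the corollary is essentially a repackaging of \cref{Theorem: self-similar spectra} for weakly regular branch groups, combined with the positive-dimension bound from \cite{RestrictedSpectra}.
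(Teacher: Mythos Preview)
Your proposal is correct and matches the paper's own justification, which appears in the sentence immediately preceding the corollary: weakly regular branch groups are self-similar and level-transitive by definition, and have positive Hausdorff dimension in $W_H$ by \cite[Proposition~5.3]{RestrictedSpectra}. The only minor quibble is phrasing: the corollary asserts that $G$ satisfies the \emph{hypotheses} of \cref{Theorem: self-similar spectra}, not its conclusions, so your sentence about ``the three conclusions of the corollary'' is slightly off---but the substance of your argument is exactly what the paper does.
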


In particular, \cref{Corollary: spectra of weakly regular branch groups} implies that in order to explicitly compute the Hausdorff dimension of a weakly regular branch group $G\le_c W_H$ in $W_H$, one may instead compute the Hausdorff dimension of a non-trivial normal branching subgroup $N\trianglelefteq G$ in $W_H$, if $G$ has such a subgroup; for example if $G$ is fractal \cite[Lemma~1.3]{Bartholdi-Siegenthaler-Zalesskii}. This is usually technically less involved as it requires less computations; see \cite{JorgeMikel} for an example.

Finally, we note that the proof of the key lemma (\cref{lemma: rists dimension}) used in the proof of \cref{Theorem: self-similar spectra}  fails as soon as one drops the self-similarity assumption for groups with dimension strictly smaller than 1. However, for 1-dimensional subgroups one may drop the self-similarity assumption in the proof and then it reduces precisely to the proof of Abért and Virág in \cite[Lemma 9.4]{AbertVirag}.

\subsection*{\textit{\textmd{Organization}}} In \cref{section: Preliminaries} we introduce the basic concepts to follow the subsequent sections. We prove \cref{Theorem: counterexample} in \cref{section: general setting}, \cref{theorem: hdim of commutators} in \cref{section: perfect hdim} and \cref{Theorem: self-similar spectra} in  \cref{section: self-similar setting}.

\subsection*{\textit{\textmd{Notation}}} For a set $S$, we denote its cardinality by $\#S$. The positive integer $m\ge 2$ is used to denote the degree of a regular rooted tree $T$ throughout the text, and when it is prime we shall use $p$ instead. Logarithms will be taken in base $m$ if not stated otherwise. We write $H\trianglelefteq G$, $H\le_c G$ and $H\trianglelefteq_c G$ to denote a normal, a closed and a normal closed subgroup of $G$ respectively.

\subsection*{Acknowledgements} 

I would like to thank Oihana Garaialde Ocaña and Jone Uria-Albizuri for fruitful discussions on \cref{theorem: abert and virag normal} while working on \cite{FiniteSpectrum} and to Anitha Thillaisundaram for useful feedback on a first version of this manuscript.

\section{Preliminaries}
\label{section: Preliminaries}

\subsection{Groups acting on rooted trees}

Let us fix an integer $m\ge 2$. The \textit{$m$-adic tree} $T$ is the infinite rooted tree such that every vertex has exactly $m$ descendants. For each $n\ge 1$, the set of vertices at distance $n$ from the root $\emptyset$ form the \textit{$n$th level of the tree} $\mathcal{L}_n$, and the set of vertices at distance at most $n$ from the root form the \textit{nth truncated tree} $T^{[n]}$. We shall use the word level to refer to the integer $n$ too. The $m$-adic tree $T$ may be identified with the free monoid on the set $\{1,\dotsc,m\}$, and each vertex $v\in T$ can be written as a finite word $v=x_1\dotsb x_{l(v)}$ with $x_i\in \{1,\dotsc,m\}$, where $l(v)$ denotes the level at which $v$ lies.

The group of graph automorphisms of $T$ will be denoted by $\mathrm{Aut}~T$. Note that elements in $\mathrm{Aut}~T$ fix the root and permute vertices at the same level. For $n\ge 1$, we denote by $\mathrm{St}(n)$ the pointwise stabilizer of the $n$th level of the tree, i.e. those automorphisms of $T$ fixing every vertex at the $n$th level.

For any $1\le n\le \infty$, any vertex $v$ and any $g\in \mathrm{Aut}~T$, we define the \textit{section of $g$ at $v$ of depth $n$} as the unique automorphism $g|_v^n \in\mathrm{Aut}~T^{[n]}$ such that 
$$(vu)^g=v^gu^{g|_v},$$
for every $u\in T^{[n]}$. For $n=\infty$ we write $g|_v$ for $g|_v^\infty$.

Given a subgroup $H\le \mathrm{Sym}(m)$ we define the \textit{iterated wreath product} $W_H$ as
$$W_H:=\{g\in \mathrm{Aut}~T\mid g|_v^1\in H\text{ for all }v\in T\}.$$
In particular $\mathrm{Aut}~T=W_{\mathrm{Sym}(m)}$. If $m=p$ a prime we shall write $W_p$ for the iterated wreath product corresponding to the subgroup $H$ generated by the $p$-cycle $(1\,\dotsb \,p)$ and call it the \textit{group of $p$-adic automorphisms}. Note that in general $H\le W_H$, where we let $H$ act on $T$ via \textit{rooted automorphisms}, i.e. each $h\in H$ acts on $T$ via the automorphism $g$ given by $g|_\emptyset^1=h$ and $g|_v^1=1$ for $v\in T\setminus\{\emptyset\}$.

For any $n\ge 1$ we define the monomorphisms $\psi_n:\mathrm{St}(n)\to \mathrm{Aut}~T\times\overset{m^n}{\ldots}\times \mathrm{Aut}~T$ via
$$g\mapsto (g|_{v_1},\dotsc,g|_{v_{m^n}}).$$
We shall write $\psi$ for $\psi_1$.

Let us fix a subgroup $G\le \mathrm{Aut}~T$. We define the level-stabilizers of $G$ in the obvious way: for every $n\ge 1$ we set $\mathrm{St}_G(n):=G\cap \mathrm{St}(n)$. We say that $G$ is \textit{level-transitive} if $G$ acts transitively on every level of $T$. We say that $G$ is \textit{self-similar} if for every $g\in G$ and $v\in T$ we have $g|_v\in G$. If $G$ is self-similar then the monomorphism $\psi_n$ induces a monomorphism $\psi_n:\mathrm{St}_G(n)\to G\times\overset{m^n}{\ldots}\times G$ for every $n\ge 1$.

For a vertex $v\in T$ we define the corresponding \textit{rigid vertex stabilizer} $\mathrm{rist}_G(v)$ as the subgroup of $G$ consisting of automorphisms fixing $v$ and every vertex not contained in the subtree rooted at $v$. We further define for any $n\ge 1$ the corresponding \textit{rigid level stabilizer} $\mathrm{Rist}_G(n)$ as the direct product of the rigid vertex stabilizers of vertices at the $n$th level. We say a level-transitive group $G$ is \textit{weakly branch} if for every $n\ge 1$ the rigid level stabilizer $\mathrm{Rist}_G(n)$ is non-trivial and \textit{branch} if $\mathrm{Rist}_G(n)$ is of finite index in $G$.

\subsection{Hausdorff dimension}

Let $G$ be a countably based profinite group with a filtration of open normal subgroups
$$G=G_0\ge G_1\ge G_2\ge \dotsb \ge G_n\ge \dotsb \ge \bigcap_{n\ge 0} G_n=1.$$
Then $G$ may be endowed with the metric $d:G\times G\to [0,\infty)$ given by
$$d(g,h):=\inf_{n\ge 0}\{|G:G_n|^{-1}\mid gh^{-1}\in G_n\}$$
and one may define a Hausdorff dimension for the Borel subsets of $G$. If $H$ is a closed subgroup of $G$ then its Hausdorff dimension coincides with its lower box dimension by \cite[Theorem 2.4]{BarneaShalev}, i.e.
$$\mathrm{hdim}_G^{\{G_n\}_{n\ge 0}}(H)=\liminf_{n\to\infty}\frac{\log|HG_n:G_n|}{\log|G:G_n|}=\liminf_{n\to\infty}\frac{\log|H:H\cap G_n|}{\log|G:G_n|}.$$
We say that $H$ has \textit{strong Hausdorff dimension in $G$} if the lower limit above is a proper limit.

The following lemma first appeared in \cite{PadicAnalytic}. The lemma in this form together with a brief easy proof may be found in \cite{FiniteSpectrum}.

\begin{lemma}[{see {\cite[Lemma 2.4]{FiniteSpectrum}}}]
    \label{lemma: strong dimension then product}
    Let $G$ be a countably based profinite group with closed subgroups $L\le_c K\le_c G$. Consider a filtration series  $\{G_n\}_{n\ge 1}$ and the induced one in $K$, i.e. $\{K_n\}_{n\geq 1}$ where $K_n=K\cap G_n$. If either $L$ has strong Hausdorff dimension in $K$ or $K$ has strong Hausdorff dimension in $G$, then
    $$\mathrm{hdim}_G^{\{G_n\}_{n\ge 1}}(L)=\mathrm{hdim}_G^{\{G_n\}_{n\ge 1}}(K)\cdot \mathrm{hdim}_K^{\{K_n\}_{n\ge 1}}(L).$$
\end{lemma}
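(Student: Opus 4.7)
The plan is to reduce the statement to a simple algebraic identity at the level of the approximating quotients, and then to an elementary fact about the liminf of a product of non-negative sequences.

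First, since $L\le K$, for every $n\ge 1$ we have $L\cap G_n=L\cap(K\cap G_n)=L\cap K_n$. Taking logarithms of the indices of the three pairs of subgroups appearing in the definitions of the relevant Hausdorff dimensions, this yields the exact factorization
$$\frac{\log|L:L\cap G_n|}{\log|G:G_n|}\;=\;\frac{\log|L:L\cap K_n|}{\log|K:K_n|}\cdot\frac{\log|K:K_n|}{\log|G:G_n|},$$
valid for all sufficiently large $n$ (so that $|K:K_n|>1$; note that $K$ is infinite whenever $\mathrm{hdim}_G(K)>0$, and the zero-dimensional case is trivial). Denote the two factors on the right by $a_n$ and $b_n$; by \cite[Theorem 2.4]{BarneaShalev} their liminfs are $\mathrm{hdim}_K^{\{K_n\}}(L)$ and $\mathrm{hdim}_G^{\{G_n\}}(K)$ respectively, while the liminf of the left-hand side is $\mathrm{hdim}_G^{\{G_n\}}(L)$.

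Second, I would invoke the following elementary fact: if $x_n, y_n\in[0,1]$ and one of the two sequences is convergent, then
$$\liminf_{n\to\infty}\,x_ny_n\;=\;\Bigl(\liminf_{n\to\infty}\,x_n\Bigr)\cdot\Bigl(\liminf_{n\to\infty}\,y_n\Bigr).$$
This is proved by passing to a subsequence realizing the liminf on the left and, within it, extracting a further subsequence along which the non-converging factor attains its liminf; the reverse inequality follows by choosing a subsequence realizing the liminf of the non-converging factor and using that the converging one has no other accumulation points. Both sequences $a_n, b_n$ lie in $[0,1]$, so the fact applies.

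Finally, the hypothesis that $L$ has strong Hausdorff dimension in $K$ says exactly that $a_n$ converges, while the hypothesis that $K$ has strong Hausdorff dimension in $G$ says exactly that $b_n$ converges; in either case the elementary fact applies and gives
$$\mathrm{hdim}_G^{\{G_n\}}(L)\;=\;\mathrm{hdim}_G^{\{G_n\}}(K)\cdot\mathrm{hdim}_K^{\{K_n\}}(L),$$
as claimed. The only real subtlety is the liminf manipulation, which requires the one-sided convergence hypothesis; without it, $\liminf(a_nb_n)$ can be strictly larger than $\liminf a_n\cdot\liminf b_n$, and the conclusion genuinely fails, which is why the "strong" hypothesis is indispensable.
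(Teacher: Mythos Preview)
The paper does not actually prove this lemma; it merely quotes it and points the reader to \cite{FiniteSpectrum} (and, for the original version, to \cite{PadicAnalytic}) for ``a brief easy proof''. Your argument is precisely that brief easy proof: factor $\dfrac{\log|L:L\cap G_n|}{\log|G:G_n|}$ through $\log|K:K_n|$ and use that $\liminf(a_nb_n)=(\liminf a_n)(\liminf b_n)$ when one of the bounded non-negative sequences converges. So your approach matches what the paper has in mind.

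One minor imprecision worth tightening: in your justification of the elementary fact, after passing to a subsequence realising $\liminf(a_nb_n)$ you cannot in general extract a further subsequence along which the non-converging factor ``attains its liminf''. What you can do (and what suffices) is extract, by Bolzano--Weierstrass, a further subsequence along which that factor converges to some limit $x'\ge\liminf a_n$; together with $b_n\to b\ge 0$ this gives $\liminf(a_nb_n)=x'b\ge(\liminf a_n)b$, which is the inequality you need. Your description of the reverse inequality is correct as stated.
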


Now, the full automorphism group $\mathrm{Aut}~T$, and more generally any iterated wreath product $W_H$, is a countably based profinite group with respect to the level-stabilizer filtration. Therefore, for any closed subgroup $G\le_c W_H$ we may compute its Hausdorff dimension in $W_H$ as
$$\mathrm{hdim}_{W_H}(G)=\liminf_{n\to\infty}\frac{\log|G:\mathrm{St}_G(n)|}{\log|W_H:\mathrm{St}_{W_H}(n)|},$$
where we drop the filtration from the notation as we shall always consider the level-stabilizer filtration. The following result was proved by the author in \cite{RestrictedSpectra}:

\begin{theorem}[{see {\cite[Theorem B and Proposition 1.1]{RestrictedSpectra}}}]
    \label{theorem: strong self-similar}
    Let $G\le_c W_H$ be self-similar. Then $G$ has strong Hausdorff dimension in $W_H$.
\end{theorem}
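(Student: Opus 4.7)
The plan is to show that the normalized sequence $d_n:=\log|G:\mathrm{St}_G(n)|/m^n$ converges as $n\to\infty$. Since $|W_H:\mathrm{St}_{W_H}(n)|=|H|^{(m^n-1)/(m-1)}$, we have $\log|W_H:\mathrm{St}_{W_H}(n)|/m^n\to \log|H|/(m-1)$, so convergence of $d_n$ immediately yields convergence of $\log|G:\mathrm{St}_G(n)|/\log|W_H:\mathrm{St}_{W_H}(n)|$, which is exactly the statement that $G$ has strong Hausdorff dimension in $W_H$.

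The first step is to extract a recursive bound from self-similarity. For $g\in\mathrm{St}_G(1)$, self-similarity guarantees $\psi(g)\in G^m$, and $g$ lies in $\mathrm{St}_G(n+1)$ if and only if each section $g|_v$ with $v\in\mathcal{L}_1$ lies in $\mathrm{St}_G(n)$. Consequently $\psi$ induces an injection $\mathrm{St}_G(1)/\mathrm{St}_G(n+1)\hookrightarrow (G/\mathrm{St}_G(n))^m$, hence
\[
|G:\mathrm{St}_G(n+1)|\le |G:\mathrm{St}_G(1)|\cdot|G:\mathrm{St}_G(n)|^m.
\]
Writing $a_n:=\log|G:\mathrm{St}_G(n)|$, this is the inequality $a_{n+1}\le a_1+m\,a_n$, and dividing by $m^{n+1}$ gives the almost-monotonicity $d_{n+1}\le d_n+a_1/m^{n+1}$.

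The second step converts this into convergence. Introduce the corrected sequence $e_n:=d_n+\frac{a_1}{(m-1)m^n}$; a direct calculation shows $e_{n+1}-e_n=(d_{n+1}-d_n)-a_1/m^{n+1}\le 0$, so $(e_n)$ is non-increasing. As $e_n\ge 0$, it converges, and since the correction term tends to zero, $(d_n)$ converges to the same limit. Multiplying by $m^n/\log|W_H:\mathrm{St}_{W_H}(n)|\to (m-1)/\log|H|$ then gives convergence of $a_n/\log|W_H:\mathrm{St}_{W_H}(n)|$.

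The crucial step is the first one: it is precisely there that self-similarity is indispensable. Without self-similarity, $\psi(\mathrm{St}_G(1))$ need not be contained in $G^m$, and one loses the embedding $\mathrm{St}_G(1)/\mathrm{St}_G(n+1)\hookrightarrow (G/\mathrm{St}_G(n))^m$; no uniform recursive control on $|G:\mathrm{St}_G(n)|$ is then available. Once this bound is in hand, the convergence argument is entirely elementary.
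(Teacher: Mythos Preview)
The paper does not give its own proof of this statement; it is quoted from \cite{RestrictedSpectra} and used as a black box. So there is nothing in the present paper to compare your argument against.

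That said, your argument is correct and is precisely the natural one: self-similarity yields the embedding $\psi:\mathrm{St}_G(1)/\mathrm{St}_G(n+1)\hookrightarrow (G/\mathrm{St}_G(n))^m$, hence the recursion $a_{n+1}\le a_1+m\,a_n$, and from there the corrected sequence $e_n=d_n+a_1/((m-1)m^n)$ is monotone and bounded, forcing convergence of $d_n$. The computation $|W_H:\mathrm{St}_{W_H}(n)|=|H|^{(m^n-1)/(m-1)}$ is also correct, so the passage from convergence of $d_n$ to convergence of $a_n/\log|W_H:\mathrm{St}_{W_H}(n)|$ goes through. This is essentially a Fekete-type argument adapted to the multiplicative recursion coming from self-similarity, and it is very likely the same idea underlying the proof in \cite{RestrictedSpectra}.
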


\section{A counterexample in the general setting}
\label{section: general setting}

We provide a useful construction proving \cref{Theorem: counterexample} and discuss its connection to \cref{Question: question 2 handbook}. We first fix some notation. Let $K\le W_H$. Then we define the subgroup $D_m(K)\le W_H$ as the preimage of the diagonal embedding of $K$ via $\psi$, i.e. $\psi(D_m(K))$ is the diagonal embedding of $K$ in $W_H\times \overset{m}{\ldots}\times W_H$.

\subsection{Proof of \cref{Theorem: counterexample}}

Let us fix $m\ge 2$ and $H\le \mathrm{Sym}(m)$ transitive for the remainder of the section. Let $K\le_c W_H$ be any level-transitive closed subgroup with positive Hausdorff dimension and let $H\le W_H$ act on $T$ via rooted automorphisms. Then we define the group $G_K\le_c W_H$ as
$$G_K:=\langle H,D_m(K)\rangle.$$
Let us see that each group $G$ in the statement of \cref{Theorem: counterexample} may be chosen among the groups $G_K$. Clearly $G_K$ is level-transitive by \cite[Proposition 4.2]{RestrictedSpectra}. The Hausdorff dimension of $G_K$ in $W_H$ is precisely $d/m$, where $d:=\mathrm{hdim}_{W_H}(K)$. Indeed
$$\log|\mathrm{St}_{G_K}(1):\mathrm{St}_{G_K}(n)|=\log|K:\mathrm{St}_K(n-1)|$$
for every $n\ge 1$ as $\mathrm{St}_{G_K}(1)=D_m(K)$, and thus
$$\frac{\log|G_K:\mathrm{St}_{G_K}(n)|}{\log|W_H:\mathrm{St}_{W_H}(n)|}=\frac{\log|H|+\log|K:\mathrm{St}_K(n-1)|}{\log|H|+m\log|W_H:\mathrm{St}_{W_H}(n-1)|}.$$
Hence $\mathrm{hdim}_{W_H}(G_K)=d/m$ just by taking the lower limit $n\to\infty$ in the equation above. If $K$ had strong Hausdorff dimension in $W_H$ it is clear that $G_K$ also has strong Hausdorff dimension in $W_H$. Note also that for any $d\in [0,1]$, the group $K$ may be chosen such that $K$ has strong Hausdorff dimension equal to $d$ in $W_H$ by \cite[Theorem A]{RestrictedSpectra} and \cref{theorem: strong self-similar}.

Now note that $1\ne H\le G_K$ is a normal subgroup of $G_K$. In fact, for any $h\in H$ and any $g\in D_m(K)$ we have $g^h=g$, i.e. $g$ and $h$ commute. Thus, we also get $h^g=h$ and so $H^{G_K}=H$, proving $H$ is normal in $G_K$. However $H$ is finite so $\mathrm{hdim}_{G_K}(H)=0$.

Lastly, note that $\mathrm{Rist}_{G_K}(1)=1$ so $G_K$ is not weakly branch. Indeed, we have $\mathrm{St}_{G_K}(1)=D_m(K)$ and therefore for any $v\in \mathcal{L}_1$ we get $\mathrm{rist}_{G_K}(v)=1$.

\subsection{Further properties of the groups $G_K$}

We compute the center of the group~$G_K\le_c W_H$ for any choice of $K$:

\begin{proposition}
\label{proposition: center}
    Let $1\ne K\le_c W_H$. Then $Z(G_K)\cong Z(H)\times Z(K)$. In particular, if $Z(H)\ne 1$ then $G_K$ has non-trivial center.
\end{proposition}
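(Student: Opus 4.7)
The plan is to identify $G_K$ as an internal direct product $H \times D_m(K)$, from which the center computation is essentially immediate.

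First, I would recall the commutation already noted in the proof of \cref{Theorem: counterexample}: for every $h \in H$ and every $g \in D_m(K)$ one has $g^h = g$, because $g$ has identical sections at each level-$1$ vertex, so conjugation by the rooted automorphism $h$ (which merely permutes the level-$1$ vertices without touching the sections) leaves $g$ unchanged. In particular $H$ and $D_m(K)$ centralize each other, and both are therefore normal in $G_K = \langle H, D_m(K)\rangle$.

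Next, I would verify that $H \cap D_m(K) = 1$. Since $D_m(K) \le \mathrm{St}_{W_H}(1)$, while the only rooted automorphism lying in $\mathrm{St}_{W_H}(1)$ is the identity, one has $H \cap \mathrm{St}_{W_H}(1) = 1$ and hence $H \cap D_m(K) = 1$. Combined with the previous step this yields an internal direct product decomposition
\[
  G_K \;=\; H \times D_m(K) \;\cong\; H \times K,
\]
where the last isomorphism uses that $\psi$ restricts to an isomorphism $\mathrm{St}_{W_H}(1) \xrightarrow{\sim} W_H^m$, so that $D_m$ is injective on $K$.

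Finally, I would invoke the standard identity $Z(A \times B) = Z(A) \times Z(B)$ for direct products to conclude $Z(G_K) \cong Z(H) \times Z(K)$, which also gives the \emph{in particular} assertion since $Z(H) \ne 1$ forces $Z(G_K) \ne 1$. I do not anticipate any real obstacle: the key ingredient (the commutation between $H$ and $D_m(K)$) is supplied by the paper itself, and the remainder is routine direct-product bookkeeping.
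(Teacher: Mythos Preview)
Your proposal is correct and follows essentially the same approach as the paper: the paper's proof also observes (referring back to the proof of \cref{Theorem: counterexample}) that $H$ is normal in $G_K$ and that $H\cap D_m(K)=1$, concludes $G_K=H\times D_m(K)$, and then reads off $Z(G_K)=Z(H)\times Z(D_m(K))\cong Z(H)\times Z(K)$. Your version merely spells out the intersection argument and the direct-product center identity in slightly more detail.
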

\begin{proof}
    We showed above that $H$ is normal in $G_K$ and clearly $H\cap D_m(K)=1$; thus $G_K=H\times D_m(K)$. Hence $Z(G_K)=Z(H)\times Z(D_m(K))\cong Z(H)\times Z(K)$.
\end{proof}

Now note that if $K\le W_H$ is branch then for $v\in \mathcal{L}_1$, the composition of the quotient map $\pi_H:G_K\to G_K/H\cong D_m(K)$ with the projection $\varphi_v:D_m(K)\to K\le W_H$ given by $\varphi_v(g):=g|_v$ defines a branch action of $G_K$ on $T$. Indeed, since $D_m(K)$ is diagonal, one has that $\ker (\varphi_v\circ \pi_H)=H$ and $\mathrm{Im}(\varphi_v\circ \pi_H)=K$ which is a branch group.

Finally, note that branch groups have trivial center by \cite[Theorem 2]{NewHorizonsGrigorchuk}. Therefore, if $K$ is branch we get $Z(G_K)\cong Z(H)$. Hence if $Z(H)<H$, i.e. if $H$ is not abelian, then the kernel of the above natural branch action of $G_K$ on $T$ is not central, asnwering \cref{Question: question 2 handbook} in the negative.

\section{Perfectness of the Hausdorff dimension}
\label{section: perfect hdim}

We now generalize \cite[Theorem 5]{AbertVirag} from $W_p$ to $W_H$ for any $H\le \mathrm{Sym}(m)$. Note that the only property of $W_H$ we shall use in the proof is that the logarithmic orders of the congruence quotients of $W_H$ grow exponentially on the level.

\begin{remark}
\label{remark: av gap}
    The original proof of Abért and Virág for $W_p$ in \cite{AbertVirag} claimed to be using a result of Kovács and Newman in \cite{KovacsNewman}, namely that for a transitive $p$-group $P\le \mathrm{Sym}(m)$, there exists a universal constant $K>0$ such that
    $$d(P)\le K\cdot\frac{m}{\sqrt{\log m}},$$
    where $d(P)$ is the minimal number of generators of $P$. However, this bound does not imply the bound 
    $$\log_p|P:P'|\le  K\cdot\frac{m}{\sqrt{\log m}},$$
    which is the bound that Abért and Virág claimed from this result (one needs to multiply the logarithmic exponent which is not bounded for the action on the tree and their proof fails). In fact, the above sharper bound was conjectured by Kovács and Praeger in \cite{KovacsPraeger} and remained open for over thirty years until it was finally proven recently by Lucchini, Sabatini and Spiga in \cite[Theorem 1]{Abelianization}. The best available bound for $\log_p|P:P'|$ when \cite{AbertVirag} was published was $m/p$, proved in \cite{KovacsPraeger} and in \cite{Bound} independently, which is not good enough for the proof in \cite[Theorem 5]{AbertVirag}.
\end{remark}

We follow mainly the proof of Abért and Virág in \cite[Theorem 5]{AbertVirag}. However, we simplify their approach and show that we do not need to separately consider orbits of length at least $\ell$ for each $\ell\ge 2$. This case is precisely the case in their partition that makes full use of the sublinear bound conjectured in \cite{KovacsPraeger} and recently proved in \cite{Abelianization}. For our proof it will suffice to consider the following bound in \cite{Bound} (see also \cite{KovacsPraeger}), which is easier to prove:

\begin{theorem}[{see \cite[Theorem 2]{Bound}}]
    \label{theorem: easy bound}
    Let $X$ be a finite set and $G\le \mathrm{Sym}(X)$. Then
    $$|G:G'|\le 2^{\#X-1}.$$
\end{theorem}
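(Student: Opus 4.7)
The plan is to prove the bound by strong induction on $n := \#X$, after first strengthening the statement to $|G:G'| \le 2^{n-m}$ whenever $G \le \mathrm{Sym}(X)$ acts with $m$ orbits. The base case $n=1$ is immediate, and the original bound is recovered in the transitive case $m=1$. The main inequality I would use repeatedly is $|G:G'| \le |(G/N):(G/N)'|\cdot|N:N'|$ for any $N \trianglelefteq G$, which follows at once from $(G/N)' = G'N/N$ together with $N' \le N \cap G'$.

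For the inductive step, first suppose $G$ is intransitive, so $m \ge 2$. Pick one orbit $Y$ and let $N = \ker(G \to \mathrm{Sym}(Y))$; then $G/N$ is transitive on $Y$ while $N$ acts faithfully on $X \setminus Y$ with $m-1$ orbits, so the inductive hypothesis applied to each factor gives $|G:G'| \le 2^{|Y|-1}\cdot 2^{(n-|Y|)-(m-1)} = 2^{n-m}$. Next, if $G$ is transitive and imprimitive with a non-trivial block system $\mathcal{B}$ of $k$ blocks, set $K = \ker(G \to \mathrm{Sym}(\mathcal{B}))$: then $G/K$ is transitive on $k < n$ points, so induction gives $|(G/K):(G/K)'| \le 2^{k-1}$, while $K$ is intransitive on $X$ with at least $k$ orbits, so the intransitive argument just given (whose subcalls reduce to strictly smaller sets) yields $|K:K'| \le 2^{n-k}$; multiplying gives $|G:G'| \le 2^{n-1}$.

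If instead $G$ is transitive and primitive, the point stabilizer $G_x$ is a maximal subgroup of $G$. If $G$ is abelian, then $G_x$ is also normal in $G$, so faithfulness forces $G_x = 1$ and $G$ is regular with $|G| = n \le 2^{n-1}$. Otherwise $G' \ne 1$, and $G_xG'$ is a subgroup containing $G_x$, so by maximality $G_xG' \in \{G_x, G\}$. The case $G_xG' = G_x$ would give $G' \le G_x$, and then normality of $G'$ together with faithfulness forces $G' \le \bigcap_{y \in X} G_y = 1$, contradicting $G' \ne 1$. Hence $G_xG' = G$, so $|G:G'| = |G_x/(G_x \cap G')| \le |G_x : G_x'|$, and applying the induction hypothesis to $G_x \le \mathrm{Sym}(X \setminus \{x\})$ bounds this by $2^{n-2} \le 2^{n-1}$. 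The main obstacle I anticipate is the primitive case, where exploiting the maximality of $G_x$ together with normality of $G'$ in the right way is the less routine step; once one spots the dichotomy $G_xG' \in \{G_x, G\}$, everything falls into place.
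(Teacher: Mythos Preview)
The paper does not prove this theorem; it is quoted from Aschbacher--Guralnick and invoked as a black box in the proof of \cref{theorem: hdim of commutators}, so there is no in-paper proof to compare against.

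Your argument is correct and gives a clean, elementary proof by strong induction with the strengthened target $|G:G'|\le 2^{n-m}$. Two small points of precision are worth noting. First, in the intransitive step, $N=\ker(G\to\mathrm{Sym}(Y))$ need not have exactly $m-1$ orbits on $X\setminus Y$: since $N\le G$, each $G$-orbit splits into a union of $N$-orbits, so $N$ has \emph{at least} $m-1$ orbits there, and this only sharpens the bound $|N:N'|\le 2^{(n-|Y|)-(m-1)}$. Second, in the imprimitive step you apply the intransitive bound to $K$, which still acts on all $n$ points; as you observe, this is legitimate because the intransitive reduction calls the inductive hypothesis only on sets of size strictly less than $n$, so within the induction step for $n$ one may establish the intransitive case first and then feed it into the imprimitive case. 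The primitive case via the dichotomy $G_xG'\in\{G_x,G\}$ is handled correctly: maximality of $G_x$ forces one of the two options, normality of $G'$ and faithfulness rule out $G'\le G_x$, and then $|G:G'|=|G_x:G_x\cap G'|\le|G_x:G_x'|$ lets you drop to $n-1$ points.
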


In particular by taking logarithms in the inequality in \cref{theorem: easy bound} we obtain
\begin{align}
    \label{align: upper bound abelianization}
    \log|G:G'|\le \# X-1\le \#X.
\end{align}
Furthermore, if $X$ is decomposed as a finite disjoint union of $G$-invariant subsets $X_1,\dotsc,X_r$ then it is clear that
    \begin{align}
        \label{align: invariant decomposition}
        \log|G:G'|\le \sum_{i=1}^r \log|G^{X_i}:(G^{X_i})'|,
    \end{align}
    where $G^{X_i}$ is the action of $G$ on $X_i$ for $1\le i\le r$.

\begin{proof}[Proof of \cref{theorem: hdim of commutators}]
    It is enough to show that
    $$\lim_{n\to\infty}\frac{\log|G:G'\mathrm{St}_G(n)|}{m^n}=0.$$
    Note that, if we write $G_n:=G/\mathrm{St}_G(n)$ for every $n\ge 1$, then
    $$\frac{G}{G'\mathrm{St}_G(n)}\cong \frac{G_n}{G_n'}.$$
    Hence
    $$\log|G:G'\mathrm{St}_G(n)|=\log|G_n:G_n'|,$$
    and we only need to prove that $\lim_{n\to\infty}\log|G_n:G_n'|m^{-n}=0$.


    Now, let us consider $G_n$ and its action on the $n$th level of $T$. Then this action may be split into disjoint orbits which are $G_n$-invariant. Let us write $\mathcal{O}_n$ for the set of orbits at level $n$ and let us fix any $k\ge 1$. We write $A_n$ for the orbits in $\mathcal{O}_n$ that are obtained from the orbits in $\mathcal{O}_{n-k}$ which branch in exactly $m^k$ distinct orbits at level $n$. We further set $B_n:=\mathcal{O}_n\setminus A_n$. Note that the orbits in $B_n$ are coming from orbits in $\mathcal{O}_{n-k}$ which branch in at most $m^k/2$ distinct orbits at level $n$. By \cref{align: invariant decomposition}, we have
    $$\log|G_n:G_n'|\le \log|G_n^{A_n}:(G_n^{A_n})'|+\log|G_n^{B_n}:(G_n^{B_n})'|.$$
    
    Note that the action of $G_n$ on $A_n$ is completely determined by the action of $G_{n-k}$ on the orbits formed by the $k$-predecessors of the orbits in $A_n$, namely $P_{n-k}\subseteq \mathcal{O}_{n-k}$. Thus by \cref{align: upper bound abelianization} we obtain
    \begin{align*}
        \log|G_n^{A_n}:(G_n^{A_n})'|&=\log|G_{n-k}^{P_{n-k}}:(G_{n-k}^{P_{n-k}})'|\le \# P_{n-k}\le \# \mathcal{O}_{n-k}\le m^{n-k}.
    \end{align*}

    Now, since $T$ is the $m$-adic tree $\#\mathcal{O}_{n+k}\le m^k\#\mathcal{O}_n$. In fact, the bound is attained precisely when each orbit in $\mathcal{O}_n$ branches in exactly $m^k$ distinct orbits at level $n+k$, i.e. when $\mathcal{O}_{n+k}=A_{n+k}$. Since $\mathcal{O}_{n+k}:=A_{n+k}\sqcup B_{n+k}$ and $B_{n+k}$ consists of those orbits in $\mathcal{O}_{n+k}$ that are obtained from orbits in $\mathcal{O}_n$ which branch in at most $m^k/2$ distinct orbits at level $n+k$, we obtain the inequality
    $$\#A_{n+k}+2\#B_{n+k}\le m^k\#\mathcal{O}_n,$$
    as each orbit in $B_{n+k}$ is being counted at least twice by the upper bound $ m^k\#\mathcal{O}_n$. Hence
    $$\#B_{n+k}=\#A_{n+k}+2\#B_{n+k}-(\#A_{n+k}+\#B_{n+k})\le m^k\#\mathcal{O}_n-\#\mathcal{O}_{n+k}.$$
    If $u$ is a vertex at the $n$th level and $v$ a descendant of $u$ at level $r\ge n$, then the $G_r$-orbit of $v$ is at least as large as the $G_n$-orbit of $u$, i.e. orbits grow as one goes down in the tree. Then, the sequence $\#\mathcal{O}_n/m^n$ is monotone so it converges, and we get
    $$0\le \limsup_{n\to\infty}\frac{\#B_{n+k}}{m^{n+k}}\le \lim_{n\to\infty}\frac{\#\mathcal{O}_n}{m^n}-\lim_{n\to\infty}\frac{\#\mathcal{O}_{n+k}}{m^{n+k}}=0.$$
    In other words
    $$\lim_{n\to \infty}\frac{\#B_n}{m^n}=0.$$
    Now, again by \cref{align: upper bound abelianization}, we get the upper bound
    $$\log|G_n^{B_n}:(G_n^{B_n})'|\le \#B_n.$$
    Putting everything together yields
    \begin{align*}
        \frac{\log|G_n:G_n'|}{m^n}&\le \frac{1}{m^n}\left(\log|G_n^{A_n}:(G_n^{A_n})'|+\log|G_n^{B_n}:(G_n^{B_n})'|\right)\le \frac{1}{m^k}+ \frac{\#B_n}{m^n},
    \end{align*}
    and taking the upper limit as $n\to\infty$ we get
    \begin{align}
        \label{align: final inequality orbits}
        \limsup_{n\to \infty}\frac{\log|G_n:G_n'|}{m^n}\le  \frac{1}{m^k}+ \limsup_{n\to \infty}\frac{\#B_n}{m^n}=\frac{1}{m^k}.
    \end{align}
    Since the inequality in \cref{align: final inequality orbits} holds for any $k\ge 1$ the result follows.
\end{proof}

\section{The self-similar case}
\label{section: self-similar setting}

Now we restrict our attention to the family of self-similar groups. We first show that the rigid level stabilizers are 1-dimensional in $G$ if $G$ is a self-similar closed subgroup with positive Hausdorff dimension in $W_H$. This fact together with \cref{theorem: hdim of commutators} and some key results in \cite{FiniteSpectrum} yield \cref{Theorem: self-similar spectra}.

\subsection{Hausdorff dimension of rigid stabilizers}

First let us give a lower bound on the Hausdorff dimension of rigid vertex stabilizers. Recall that for a vertex $v\in T$ we write $l(v)$ for the level at which $v$ lies on.

\begin{lemma}
\label{lemma: rists dimension}
    Let $G\le_c W_H$ be a closed self-similar subgroup with positive Hausdorff dimension in $W_H$. Then for any vertex $v\in T$ the subgroup $\mathrm{rist}_G(v)$ has Hausdorff dimension at least $m^{-l(v)}$ in $G$. In particular $G$ is weakly branch.
\end{lemma}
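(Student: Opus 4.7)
The plan is to reduce, via the self-similar embedding $\psi_l: \mathrm{St}_G(l) \hookrightarrow G^{m^l}$, to showing that the section subgroup $R_v := \pi_v(\mathrm{rist}_G(v)) \le G$ has Hausdorff dimension~$1$ in $G$. The isomorphism $\pi_v: \mathrm{rist}_G(v) \xrightarrow{\sim} R_v$ sends $\mathrm{rist}_G(v) \cap \mathrm{St}_G(n)$ to $R_v \cap \mathrm{St}_G(n-l)$ for $n \ge l$, and since $G$ has strong Hausdorff dimension in $W_H$ by \cref{theorem: strong self-similar}, a direct comparison of the defining ratios gives $\mathrm{hdim}_G(\mathrm{rist}_G(v)) = m^{-l} \cdot \mathrm{hdim}_G(R_v)$. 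Hence it suffices to prove $\mathrm{hdim}_G(R_v) = 1$.

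Next, I would set $K_\psi := \psi_l(\mathrm{St}_G(l))$ and $G_u := \pi_u(K_\psi) \le G$ for each $u \in \mathcal{L}_l$. Strong dim of $G$ forces $K_\psi$ to have strong Hausdorff dimension~$1$ in $G^{m^l}$. Applying \cref{theorem: hdim of commutators} to the closed subgroup $\mathrm{St}_G(l) \le_c W_H$ yields $\mathrm{hdim}_{W_H}([\mathrm{St}_G(l), \mathrm{St}_G(l)]) = \mathrm{hdim}_{W_H}(G)$, which combined with strong dim and \cref{lemma: strong dimension then product} lifts to $\mathrm{hdim}_{G^{m^l}}([K_\psi, K_\psi]) = 1$. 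Since $[K_\psi, K_\psi] \le \prod_u G_u'$ and the product-dimension formula in direct products gives $\mathrm{hdim}_{G^{m^l}}(\prod_u G_u') = m^{-l}\sum_u \mathrm{hdim}_G(G_u')$, each $\mathrm{hdim}_G(G_u')$ must equal $1$ in the strong sense; invoking \cref{theorem: hdim of commutators} again for each $G_u \le_c W_H$ then promotes this to $\mathrm{hdim}_G(G_u) = 1$ with strong Hausdorff dim.

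The remaining step, which is the main obstacle, is to upgrade $\mathrm{hdim}_G(G_u) = 1$ to $\mathrm{hdim}_G(R_v) = 1$. Geometrically, $\psi_l(\mathrm{rist}_G(v))$ is exactly the single-coordinate kernel of the subdirect product $K_\psi \le \prod_u G_u$ at position $v$. I would proceed by exploiting the short exact sequence $1 \to \mathrm{rist}_G(v) \to \mathrm{St}_G(l) \xrightarrow{\pi_{\ne v}} \pi_{\ne v}(\mathrm{St}_G(l)) \to 1$ to write $\log|\mathrm{rist}_G(v) : \mathrm{rist}_G(v) \cap \mathrm{St}_G(n)| = \log|\mathrm{St}_G(l) : \mathrm{St}_G(n)| - \log|\pi_{\ne v}(\mathrm{St}_G(l)) : \pi_{\ne v}(\mathrm{St}_G(n))|$, bound the second term above by $\sum_{u \ne v}\log|G_u : G_u \cap \mathrm{St}_G(n-l)|$ up to a controllable error, and use the strong dim of each $G_u$ in $G$ together with the strong dim of $K_\psi$ in $G^{m^l}$ to show that this error is $o(\log|G : \mathrm{St}_G(n)|)$. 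After dividing by $\log|G : \mathrm{St}_G(n)| \sim m^l \log|G : \mathrm{St}_G(n-l)|$ and taking the liminf one arrives at $\mathrm{hdim}_G(\mathrm{rist}_G(v)) \ge 1 - (m^l - 1)/m^l = m^{-l}$.

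The ``in particular'' clause is then immediate: $\mathrm{rist}_G(v)$ has positive Hausdorff dim, hence is non-trivial, so $\mathrm{Rist}_G(n) \ne 1$ for every $n \ge 1$, which together with level-transitivity (implicit in the paper's notion of weakly branch) gives that $G$ is weakly branch. The main technical obstacle is controlling the error term in the upper bound on $\log|\pi_{\ne v}(\mathrm{St}_G(l)) : \pi_{\ne v}(\mathrm{St}_G(n))|$, because the self-similarity constraint couples the $v$-th coordinate to the remaining $m^l - 1$ coordinates inside $K_\psi$; this is precisely where the proof genuinely uses self-similarity for groups with dimension strictly smaller than $1$, in contrast to the $1$-dimensional case of Abért and Virág.
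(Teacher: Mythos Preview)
Your third paragraph already \emph{is} the paper's proof, and everything before it is an unnecessary detour. The paper argues exactly as you do there: apply the first isomorphism theorem to the map $\psi_{\lnot v}\colon \mathrm{St}_G(l)/\mathrm{St}_G(n)\to (G/\mathrm{St}_G(n-l))^{m^l-1}$, bound the image trivially by $|G:\mathrm{St}_G(n-l)|^{m^l-1}$, and combine this with the two-sided $\epsilon$-bounds on $\log|\mathrm{St}_G(l):\mathrm{St}_G(n)|$ coming from strong Hausdorff dimension of~$G$. Dividing by $\log|G:\mathrm{St}_G(n)|$ and letting $\epsilon\to 0$ gives $\mathrm{hdim}_G(\mathrm{rist}_G(v))\ge m^{-l}$ directly.

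In particular, the paper does \emph{not} invoke \cref{theorem: hdim of commutators} anywhere in this lemma, so your second paragraph is superfluous. You do not need to know that $\mathrm{hdim}_G(G_u)=1$, strongly or otherwise: the only fact used about $G_u=\pi_u(\psi_l(\mathrm{St}_G(l)))$ is the containment $G_u\le G$, which is self-similarity. This alone gives the clean bound $\log|\mathrm{Im}\,\psi_{\lnot v}|\le (m^l-1)\log|G:\mathrm{St}_G(n-l)|$ with no ``controllable error'' to manage. (Your assertion that each $\mathrm{hdim}_G(G_u')=1$ ``in the strong sense'' also does not follow from the product inequality you wrote; fortunately it is never needed.) Likewise, the reduction in your first paragraph to $\mathrm{hdim}_G(R_v)=1$ is correct but idle: you never return to $R_v$, and in paragraph~3 you compute $\mathrm{hdim}_G(\mathrm{rist}_G(v))$ directly anyway.

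So the plan is salvageable but should be drastically shortened: drop paragraphs~1--2, and in paragraph~3 replace ``strong dim of each $G_u$'' and ``controllable error'' by the one-line image bound above.
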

\begin{proof}
    Let us denote $d:=\mathrm{hdim}_{W_H}(G)>0$ to simplify notation. Let us fix a level $k\ge 1$. Since $G\le W_H$ is self-similar, it has strong Hausdorff dimension in $W_H$ by \cref{theorem: strong self-similar}. The same holds for any finite-index subgroup of $G$, in particular for $\mathrm{St}_G(k)$. Hence, for any $\epsilon>0$ there exists $N_\epsilon\ge k$ such that 
    \begin{align}
    \label{align: inequalities log indices}
        d-\epsilon&\le \frac{\log|\mathrm{St}_G(k):\mathrm{St}_G(n)|}{\log|W_H:\mathrm{St}_{W_H}(n)|}\le \frac{\log|G:\mathrm{St}_G(n)|}{\log|W_H:\mathrm{St}_{W_H}(n)|}\le d+\epsilon
    \end{align}
    for all $n\ge N_\epsilon$. 
    
    Since $G$ is self-similar, the monomorphism $\psi_k:\mathrm{St}_G(k)\to G\times\overset{m^k}{\ldots}\times G$ induces a monomorphism
    \begin{align}
    \label{align: embedding ss}
        \frac{\mathrm{St}_G(k)}{\mathrm{St}_G(n)}&\hookrightarrow \frac{G}{\mathrm{St}_G(n-k)}\times \overset{m^k}{\dotsb}\times \frac{G}{\mathrm{St}_G(n-k)}
    \end{align}
    for every $n\ge k$. In the following, we shall assume $\psi_k$ is the induced monomorphism in \cref{align: embedding ss}. We shall also assume in the following that $n\ge N_\epsilon+k$.

    For a vertex $v$ at the $k$th level of $T$, let us denote by $\psi_{\lnot v}$ the composition $\pi_{\lnot v} \circ \psi_k$, where $\pi_{\lnot v}$ is the natural projection on all the coordinates of the image of $\psi_k$ but the one corresponding to the vertex $v$. Then by \textcolor{teal}{Equations (}\ref{align: inequalities log indices}\textcolor{teal}{)} and \textcolor{teal}{(}\ref{align: embedding ss}\textcolor{teal}{)}  we get
    \begin{align}
        \label{align: size of image}
        |\mathrm{Im}~\psi_{\lnot v}|\le |G:\mathrm{St}_G(n-k)|^{m^k-1}\le |W_H:\mathrm{St}_{W_H}(n-k)|^{(m^k-1)(d+\epsilon)}.
    \end{align}
    Hence, as $\psi_{\lnot v}$ is a group homomorphism, by the first isomorphism theorem and \textcolor{teal}{Equations (}\ref{align: inequalities log indices}\textcolor{teal}{)} and \textcolor{teal}{(}\ref{align: size of image}\textcolor{teal}{)}, the order of the kernel of $\psi_{\lnot v}$ is at least
    \begin{align*}
        |\ker \psi_{\lnot v}|=\frac{|\mathrm{St}_G(k):\mathrm{St}_G(n)|}{|\mathrm{Im}~\psi_{\lnot v}|}&\ge \frac{|W_H:\mathrm{St}_{W_H}(n)|^{(d-\epsilon)}}{|W_H:\mathrm{St}_{W_H}(n-k)|^{(m^k-1)(d+\epsilon)}}\\
        &\ge\frac{|W_H:\mathrm{St}_{W_H}(n-k)|^{m^k(d-\epsilon)}}{|W_H:\mathrm{St}_{W_H}(n-k)|^{(m^k-1)(d+\epsilon)}}\\
        &=|W_H:\mathrm{St}_{W_H}(n-k)|^{d+\epsilon(1-2m^k)}.
    \end{align*}
    
    Now, note that by the definition of the rigid vertex stabilizer we have
    $$|\mathrm{rist}_G(v):\mathrm{St}_{\mathrm{rist}_G(v)}(n)|=|\ker \psi_{\lnot v}|,$$
    Therefore, putting all together we obtain
    \begin{align*}
    \mathrm{hdim}&_G(\mathrm{rist}_G(v))=\liminf_{n\to\infty}\frac{\log|\mathrm{rist}_G(v):\mathrm{St}_{\mathrm{rist}_G(v)}(n)|}{\log|G:\mathrm{St}_G(n)|}\\
    &\ge \liminf_{n\to\infty}\frac{\big(d+\epsilon(1-2m^k)\big)\log|W_H:\mathrm{St}_{W_H}(n-k)|}{\log|G:\mathrm{St}_G(n)|}\\
    &=\big(d+\epsilon(1-2m^k)\big)\cdot\liminf_{n\to\infty}\frac{m^{-k}\big(-\log|H\wr\overset{k}{\dotsb}\wr H|+\log|W_H:\mathrm{St}_{W_H}(n)|\big)}{\log|G:\mathrm{St}_G(n)|}\\
    &=\frac{\big(d+\epsilon(1-2m^k)\big)}{m^k}\cdot\lim_{n\to\infty}\frac{\log|W_H:\mathrm{St}_{W_H}(n)|}{\log|G:\mathrm{St}_G(n)|}\\
    &=\frac{\big(d+\epsilon(1-2m^k)\big)}{m^kd}.
    \end{align*}
    Since the above holds for any $\epsilon>0$ we get
\begin{align*}
    \mathrm{hdim}_G(\mathrm{rist}_G(v))&\ge \frac{1}{m^{k}}.\qedhere
\end{align*}
\end{proof}

\begin{remark}
    Note that if $\mathrm{hdim}_{W_H}(G)=1$, then one does not need self-similarity to obtain the inequality in \cref{align: size of image} and one recovers \cref{theorem: abert and virag normal}. However, as soon as $d<1$, the proof fails if self-similarity is not assumed. However, the counterexamples in \cref{Theorem: counterexample} have dimension at most $1/m$, so it would be interesting to see whether there exist counterexamples of dimension arbitrary close to~1 or whether self-similarity may be dropped from \cref{Theorem: self-similar spectra} for dimensions close enough to 1.
\end{remark}

\begin{lemma}
    \label{lemma: rists lower bound for strong hdim}
    Let $G\le_c W_H$ be a closed subgroup. Assume further that for any vertex $v\in T$ we have $\mathrm{hdim}_G(\mathrm{rist}_G(v))\ge m^{-l(v)}$. Then $\mathrm{Rist}_G(k)$ has Hausdorff dimension~1 in~$G$ for every $k\ge 1$.
\end{lemma}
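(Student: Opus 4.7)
The plan is to reduce the Hausdorff dimension of $\mathrm{Rist}_G(k)$ to a sum of the Hausdorff dimensions of the individual rigid vertex stabilizers at level $k$, and then to invoke the hypothesis together with the trivial upper bound $\mathrm{hdim}\le 1$.

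First I would observe that rigid vertex stabilizers at distinct vertices of the same level act on disjoint subtrees, hence commute elementwise and pairwise intersect trivially. Consequently $\mathrm{Rist}_G(k)$ is the internal direct product
$$\mathrm{Rist}_G(k)=\prod_{v\in\mathcal{L}_k}\mathrm{rist}_G(v),$$
and for every $n\ge k$ one has $\mathrm{Rist}_G(k)\cap\mathrm{St}_G(n)=\prod_{v\in\mathcal{L}_k}\mathrm{St}_{\mathrm{rist}_G(v)}(n)$. Taking indices, this factorizes as
$$|\mathrm{Rist}_G(k):\mathrm{Rist}_G(k)\cap\mathrm{St}_G(n)|=\prod_{v\in\mathcal{L}_k}|\mathrm{rist}_G(v):\mathrm{St}_{\mathrm{rist}_G(v)}(n)|.$$

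Next I would take logarithms, divide by $\log|G:\mathrm{St}_G(n)|$, and apply superadditivity of $\liminf$ for finitely many non-negative sequences:
\begin{align*}
\mathrm{hdim}_G(\mathrm{Rist}_G(k)) &= \liminf_{n\to\infty}\sum_{v\in\mathcal{L}_k}\frac{\log|\mathrm{rist}_G(v):\mathrm{St}_{\mathrm{rist}_G(v)}(n)|}{\log|G:\mathrm{St}_G(n)|}\\
&\ge \sum_{v\in\mathcal{L}_k}\mathrm{hdim}_G(\mathrm{rist}_G(v)).
\end{align*}

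Finally, the hypothesis yields $\mathrm{hdim}_G(\mathrm{rist}_G(v))\ge m^{-k}$ for each of the $m^k$ vertices $v\in\mathcal{L}_k$, so the right-hand side is at least $m^k\cdot m^{-k}=1$. Since the Hausdorff dimension in any profinite group is bounded above by $1$, equality must hold.

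There is no real obstacle: the only point requiring a touch of care is the factorization of $\mathrm{Rist}_G(k)\cap\mathrm{St}_G(n)$, which is immediate because the factors of the direct product are supported on pairwise disjoint subtrees, so acting trivially on the first $n$ levels decouples into acting trivially on the first $n-k$ levels of each subtree independently.
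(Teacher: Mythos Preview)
Your proposal is correct and follows essentially the same approach as the paper: both arguments use the direct-product decomposition of $\mathrm{Rist}_G(k)$ to factor the index $|\mathrm{Rist}_G(k):\mathrm{St}_{\mathrm{Rist}_G(k)}(n)|$ as a product over $v\in\mathcal{L}_k$, then apply superadditivity of $\liminf$ and the hypothesis to obtain the lower bound~$1$. Your write-up is slightly more explicit about justifying the factorization via the disjoint-support structure, but the argument is the same.
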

\begin{proof}
    Since $\mathrm{hdim}_G(\mathrm{Rist}_G(k))\le 1$, it is enough to show that $\mathrm{hdim}_G(\mathrm{Rist}_G(k))\ge 1$. Then the result follows from the equality
    $$\log|\mathrm{Rist}_G(k):\mathrm{St}_{\mathrm{Rist}_G(k)}(n)|=\sum_{v\in \mathcal{L}_k}\log|\mathrm{rist}_G(v):\mathrm{St}_{\mathrm{rist}_G(v)}(n)|$$
    as
    \begin{align*}
        \mathrm{hdim}_G(\mathrm{Rist}_G(k))&=\liminf_{n\to\infty}\frac{\log|\mathrm{Rist}_G(k):\mathrm{St}_{\mathrm{Rist}_G(k)}(n)|}{\log|G:\mathrm{St}_G(n)|}\\
        &\ge \sum_{v\in \mathcal{L}_k} \liminf_{n\to\infty}\frac{\log|\mathrm{rist}_G(v):\mathrm{St}_{\mathrm{rist}_G(v)}(n)|}{\log|G:\mathrm{St}_G(n)|}\\
        &=\sum_{v\in \mathcal{L}_k}\mathrm{hdim}_G(\mathrm{rist}_G(v))\\
        &\ge 1.\qedhere
    \end{align*}
\end{proof}

\subsection{Proof of the main results}

We can now answer \textcolor{teal}{Questions} \ref{Question: abert and virag normal} and \ref{Question: positive dim implies weakly branch} in the positive for self-similar positive-dimensional groups:

\begin{proposition}
\label{proposition: self-similar}
    Let $G\le W_H$ be a closed level-transitive self-similar group with positive Hausdorff dimension in $W_H$. Then $G$ is weakly branch and every non-trivial normal closed subgroup $N\trianglelefteq_c G$ has Hausdorff dimension 1 in $W_H$.
\end{proposition}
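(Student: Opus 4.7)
My plan is to dispatch both assertions quickly using the results already collected in the excerpt. The weakly-branch assertion is immediate: \cref{lemma: rists dimension} gives $\mathrm{hdim}_G(\mathrm{rist}_G(v)) \ge m^{-l(v)} > 0$ for every $v \in T$, so every rigid vertex stabilizer is nontrivial and hence $\mathrm{Rist}_G(k) \ne 1$ for all $k \ge 1$. For the normal-subgroup assertion, write $d := \mathrm{hdim}_{W_H}(G) > 0$. The strategy is to show that $N$ contains $\mathrm{Rist}_G(k)'$ for some level $k$ and then to promote this to dimension $1$ in $G$ via the perfectness result of \cref{theorem: hdim of commutators}.

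For the first step I will use the standard commutator trick of Grigorchuk and Ab\'ert--Vir\'ag. Pick any nontrivial $n \in N$ and a vertex $v$ at some level $k$ with $v^n \ne v$, so that the subtrees rooted at $v$ and $v^n$ are disjoint and $\mathrm{rist}_G(v)$ centralizes $\mathrm{rist}_G(v^n)$ elementwise. For $g_1 \in \mathrm{rist}_G(v)$ one has $[g_1, n] = g_1^{-1} g_1^n \in N$ with the two factors lying in $\mathrm{rist}_G(v)$ and $\mathrm{rist}_G(v^n)$, respectively; for any $g_2 \in \mathrm{rist}_G(v)$ the identity $[ab,c] = [a,c]^b [b,c]$, the vanishing $[g_1^n, g_2] = 1$, and the centralization of $\mathrm{rist}_G(v)$ by $g_1^n$ collapse $[[g_1,n], g_2]$ to $[g_1^{-1}, g_2]$, which therefore lies in $N$. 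As $g_1, g_2$ range over $\mathrm{rist}_G(v)$, this produces $\mathrm{rist}_G(v)' \le N$; level-transitivity of $G$ conjugates this into every rigid vertex stabilizer at level $k$, and normality of $N$ together with $\mathrm{Rist}_G(k) = \prod_{u \in \mathcal{L}_k} \mathrm{rist}_G(u)$ then gives $\mathrm{Rist}_G(k)' \le N$.

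The dimension bookkeeping is then mechanical. \cref{lemma: rists dimension} together with \cref{lemma: rists lower bound for strong hdim} yield $\mathrm{hdim}_G(\mathrm{Rist}_G(k)) = 1$, and combining with the strong Hausdorff dimension of $G$ in $W_H$ from \cref{theorem: strong self-similar} via \cref{lemma: strong dimension then product} gives $\mathrm{hdim}_{W_H}(\mathrm{Rist}_G(k)) = d$. Applying \cref{theorem: hdim of commutators} to $\mathrm{Rist}_G(k) \le_c W_H$ yields $\mathrm{hdim}_{W_H}(\mathrm{Rist}_G(k)') = d$, and a second application of \cref{lemma: strong dimension then product} converts this back to $\mathrm{hdim}_G(\mathrm{Rist}_G(k)') = 1$, whence $\mathrm{hdim}_G(N) = 1$ by monotonicity. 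The only content-bearing step is the commutator identification $[[g_1,n], g_2] = [g_1^{-1}, g_2]$, and this is essentially the main obstacle; the rest is a direct assembly of the results already proved in the excerpt.
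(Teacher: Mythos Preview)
Your proof is correct and follows essentially the same route as the paper. The paper cites the inclusion $\mathrm{Rist}_G(k)'\le N$ as a known fact (referencing \cite{pro-c}) rather than deriving it, and it justifies the application of \cref{lemma: strong dimension then product} via the observation that $1$-dimensional subgroups automatically have strong Hausdorff dimension (applied to $\mathrm{Rist}_G(k)$ in $G$) instead of invoking the strong dimension of $G$ in $W_H$ from \cref{theorem: strong self-similar}; both invocations are valid and lead to the same chain $\mathrm{hdim}_G(N)\ge \mathrm{hdim}_G(\mathrm{Rist}_G(k)')=\mathrm{hdim}_G(\mathrm{Rist}_G(k))=1$.
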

\begin{proof}
    The group $G$ is weakly branch directly by \cref{lemma: rists dimension}. Now, any non-trivial normal subgroup $N$ contains the commutator subgroup of some rigid level stabilizer $\mathrm{Rist}_G(n)$ (this is a well-known fact, see for instance \cite[Lemma 4]{pro-c}). Therefore, by \cref{lemma: rists lower bound for strong hdim} we have that $\mathrm{hdim}_{G}(\mathrm{Rist}_G(n))=1$ and thus
    $$\mathrm{hdim}_{G}(N)\ge \mathrm{hdim}_{G}(\mathrm{Rist}_G(n)')=\mathrm{hdim}_{G}(\mathrm{Rist}_G(n))=1,$$
    where the first inequality follows from monotonicity of the Hausdorff dimension and the first equality by \cref{theorem: hdim of commutators} and \cref{lemma: strong dimension then product}, as 1-dimensional subgroups have strong Hausdorff dimension.
\end{proof}

By \cref{theorem: hdim of commutators} we may drop the assumption (ii) in \cite[Theorem 3.5]{FiniteSpectrum} for the usual stabilizer filtration:

\begin{theorem}[{see {\cite[Theorem 3.5]{FiniteSpectrum}}}]
\label{theorem: full spectra jone and oihana}
    Let $G\le_c W_H$ be a closed and level-transitive subgroup such that for all $n\ge 1$ we have $\mathrm{hdim}_{G}(\mathrm{Rist}_G(n))=1$.
    Then the Hausdorff spectrum of $G$ is given by
        $$\mathrm{hspec}(G)=[0,1],$$
    and the normal Hausdorff spectrum of $G$ is given by
    $$\mathrm{hspec}_{\trianglelefteq}(G)=\{0,1\}.$$
\end{theorem}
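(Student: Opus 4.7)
The plan is to treat the two spectrum claims separately. Claim (ii) should follow quickly from the tools already developed in the paper, while claim (i) requires constructing, for each $\alpha \in [0,1]$, an explicit closed subgroup of $G$ of Hausdorff dimension $\alpha$, and this is where the technical work lies.

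For (ii), the hypothesis $\mathrm{hdim}_G(\mathrm{Rist}_G(n)) = 1 > 0$ forces each $\mathrm{Rist}_G(n)$ to be non-trivial, so $G$ is weakly branch. By the standard fact on weakly branch groups already invoked in the proof of \cref{proposition: self-similar} (namely \cite[Lemma 4]{pro-c}), every non-trivial normal closed subgroup $N \trianglelefteq_c G$ contains $\mathrm{Rist}_G(n)'$ for some $n$. Since Hausdorff dimension in $G$ is bounded above by $1$, the hypothesis $\mathrm{hdim}_G(\mathrm{Rist}_G(n)) = 1$ is automatically a proper limit, so $\mathrm{Rist}_G(n)$ has strong Hausdorff dimension in $G$. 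Combining \cref{lemma: strong dimension then product} in both directions with \cref{theorem: hdim of commutators} applied inside $W_H$ to $\mathrm{Rist}_G(n)$ yields $\mathrm{hdim}_G(\mathrm{Rist}_G(n)') = 1$, exactly as in the proof of \cref{proposition: self-similar}. Monotonicity then gives $\mathrm{hdim}_G(N) = 1$, and together with $\mathrm{hdim}_G(1) = 0$ this establishes $\mathrm{hspec}_\trianglelefteq(G) = \{0,1\}$.

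For (i), fix $\alpha \in [0,1]$. Level-transitivity plus the identity $\log|\mathrm{Rist}_G(n) : \mathrm{St}_{\mathrm{Rist}_G(n)}(k)| = m^n \log|\mathrm{rist}_G(v_0) : \mathrm{St}_{\mathrm{rist}_G(v_0)}(k)|$ for any $v_0 \in \mathcal{L}_n$, combined with $\mathrm{hdim}_G(\mathrm{Rist}_G(n)) = 1$ being a proper limit, forces $\mathrm{hdim}_G(\mathrm{rist}_G(v)) = m^{-l(v)}$ with strong Hausdorff dimension for every vertex $v$. Write $\alpha = \sum_{n \ge 1} a_n m^{-n}$ with $0 \le a_n \le m-1$, then inductively pick an antichain $S = \bigsqcup_{n \ge 1} S_n \subseteq T$ with $|S_n| = a_n$ (feasible because $\sum_{k \le n} a_k m^{n-k} \le m^n$). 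Set
$$H_\alpha := \prod_{v \in S} \mathrm{rist}_G(v);$$
because $S$ is an antichain the factors commute and intersect pairwise trivially, so this is a genuine closed direct product in $G$, and
$$\log|H_\alpha : H_\alpha \cap \mathrm{St}_G(k)| = \sum_{v \in S,\, l(v) \le k} \log|\mathrm{rist}_G(v) : \mathrm{St}_{\mathrm{rist}_G(v)}(k)|.$$

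The main obstacle is to pin down $\mathrm{hdim}_G(H_\alpha) = \alpha$ on the nose, where both bounds require care. The lower bound follows by restricting to finite truncations of $S$: for each $N$ the partial product has strong Hausdorff dimension $\sum_{v \in S,\, l(v) \le N} m^{-l(v)}$, and taking the supremum over $N$ gives $\mathrm{hdim}_G(H_\alpha) \ge \alpha$. The upper bound must interchange the infinite sum over $S$ with $\liminf_k$; here the key observation is that the inclusion $\mathrm{Rist}_G(l(v)) \le G$ together with level-transitivity yields the uniform bound
$$\log|\mathrm{rist}_G(v) : \mathrm{St}_{\mathrm{rist}_G(v)}(k)| \le m^{-l(v)} \log|G : \mathrm{St}_G(k)|,$$
so each summand in the ratio is dominated by $m^{-l(v)}$, which is summable over $S$ with total $\alpha$. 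A dominated-convergence-type argument then both forces the limit to exist and gives the required equality, showing in fact that $H_\alpha$ has \emph{strong} Hausdorff dimension $\alpha$ in $G$. Since $\alpha = 1$ is realised by $G$ itself, this completes $\mathrm{hspec}(G) = [0,1]$.
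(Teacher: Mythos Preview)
The paper does not actually prove this theorem: it is quoted from \cite{FiniteSpectrum}, with the single remark that the extra hypothesis (ii) appearing there may be dropped in view of \cref{theorem: hdim of commutators}. So there is no ``paper's own proof'' to compare against beyond that one-line reduction. Your proposal goes considerably further and supplies a self-contained argument.

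For the normal spectrum, your argument is precisely the one the paper gives in \cref{proposition: self-similar}: weak branchness from non-triviality of the rigid stabilisers, the containment $\mathrm{Rist}_G(n)'\le N$ from \cite[Lemma~4]{pro-c}, and then $\mathrm{hdim}_G(\mathrm{Rist}_G(n)')=1$ via \cref{theorem: hdim of commutators} together with \cref{lemma: strong dimension then product} and the observation that a $1$-dimensional subgroup automatically has strong Hausdorff dimension. This is exactly how the paper intends the dropped hypothesis~(ii) to be recovered, so here you and the paper agree.

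For the full spectrum your construction is not in the present paper at all, but it is the natural one and it works. The two points that deserve an extra sentence are: (a) that $H_\alpha=\prod_{v\in S}\mathrm{rist}_G(v)$ is genuinely closed --- this holds because the factors are supported on pairwise disjoint subtrees, so the infinite product defines a well-defined automorphism and the multiplication map from the compact Tychonoff product $\prod_v \mathrm{rist}_G(v)$ onto $H_\alpha$ is continuous, hence has compact (closed) image; and (b) the exchange of $\liminf$ with the infinite sum over~$S$, which you correctly handle by the uniform domination $r_v(k)\le m^{-l(v)}$ coming from $\mathrm{Rist}_G(l(v))\le G$ and level-transitivity. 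With those two points made explicit, the argument is complete and in fact yields strong Hausdorff dimension~$\alpha$ for $H_\alpha$, slightly more than required.
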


\begin{proof}[Proof of \cref{Theorem: self-similar spectra}]
    The result follows from \textcolor{teal}{Lemmata} \ref{lemma: rists dimension} and \ref{lemma: rists lower bound for strong hdim}, \cref{proposition: self-similar} and  \cref{theorem: full spectra jone and oihana}.
\end{proof}



\bibliographystyle{unsrt}

\end{document}